\newtheorem{algorithm}{\normalfont\textsc{Algorithm}}
\newtheorem{assumption}{A.\!}
\newcommand{\norm}[1]{\|#1\|}
\newcommand{\abs}[1]{|#1|}
\newcommand{\Eproof}{\hfill$\square$}
\begin{document}

\title{Combining Lagrangian Decomposition and Excessive Gap Smoothing Technique for Solving Large-Scale Separable Convex Optimization Problems}

\titlerunning{Excessive gap smoothing techniques in Lagrangian dual decomposition}

\author{Tran Dinh Quoc \and
        Carlo Savorgnan \and Moritz Diehl 
}


\institute{Tran Dinh Quoc \and Carlo Savorgnan \and Moritz Diehl 
\at {Department of Electrical Engineering (ESAT-SCD) and Optimization in Engineering Center (OPTEC), K.U. Leuven, Kasteelpark Arenberg 10, B-3001 Leuven, Belgium.\\ 
\email{\texttt{\{quoc.trandinh, carlo.savorgnan, moritz.diehl\}@esat.kuleuven.be}}}\\
Tran Dinh Quoc, Hanoi University of Science, Hanoi, Vietnam.
}

\date{Received: date / Accepted: date}

\maketitle

\begin{abstract}
A new algorithm for solving large-scale convex optimization problems with a separable objective function is proposed. The basic idea is to combine three
techniques: Lagrangian dual decomposition, excessive gap and smoothing.
The main advantage of this algorithm is that it dynamically updates the smoothness parameters which leads to numerically robust performance.
The convergence of the algorithm is proved under weak conditions imposed on the original problem. The rate of convergence is $O(\frac{1}{k})$, where $k$ is the iteration counter.
In the second part of the paper, the algorithm is coupled with a dual scheme to construct a switching variant of the dual decomposition. 
We discuss implementation issues and make a theoretical comparison. Numerical examples confirm the theoretical results.

\keywords{Excessive gap \and smoothing technique \and Lagrangian decomposition \and proximal mappings \and large-scale problem \and separable convex optimization \and distributed optimization.}
\end{abstract}

\section{Introduction}
Large-scale convex optimization problems appear in many areas of science such as graph theory, networks, transportation, distributed model predictive control,
distributed estimation and multistage stochastic optimization
\cite{Cohen1978,Holmberg2001,Kojima1993,Komodakis2010,Love1973,Purkayastha2008,Samar2007,Tsiaflakis2010,Vania2009,Venkat2008,Zhao2005}.
Solving large-scale optimization problems is still a challenge in many applications \cite{Conejo2006}.
Over the years, thanks to the development of parallel and distributed computer systems, the chances for solving large-scale problems have been increased.
However, methods and algorithms for solving this type of problems are limited \cite{Bertsekas1989,Conejo2006}.

Convex minimization problems with a separable objective function form a class of problems which is relevant in many applications. This class of problems is also
known as separable convex minimization problems, see, e.g. \cite{Bertsekas1989}.
Without loss of generality, a separable convex optimization problem can be written in the form of a convex program with separable objective function and coupled
linear constraints \cite{Bertsekas1989}. In addition, decoupling convex constraints may also be considered. Mathematically, this problem can be formulated in
the following form:
\begin{equation}\label{eq:separable_convex_problem}
\begin{array}{cl}
\displaystyle\min_{x\in\mathbb{R}^{n}} &\phi(x) := \displaystyle\sum_{i=1}^M\phi_i(x_{i})\\
\textrm{s.t.}~ &x_{i} \in X_{i}~ (i=1,\cdots, M),\\
& \displaystyle\sum_{i=1}^MA_{i}x_{i} = b,
\end{array}
\end{equation}
where $\phi_i:\mathbb{R}^{n_i} \to \mathbb{R}$ is convex, $X_{i} \in\mathbb{R}^{n_i}$ is a nonempty, closed convex set, $A_{i}\in\mathbb{R}^{m\times n_i}$, $b\in\mathbb{R}^m$ for
all $i=1,\dots, M$, and $n_1+n_2 +\cdots + n_M = n$.
The last constraint is called \textit{coupling linear constraint}.
In principle, many convex problems can be written in this separable form by doubling the variables, i.e. introducing new variables $x_{i}$ and imposing the constraint $x_{i} = x$.
Despite the increased number of variables, treating convex problems by doubling variables may be useful in some situations, see, e.g. \cite{Fukushima1996,Goldfarb2010}.
 
In the literature, numerous approaches have been proposed for solving problem \eqref{eq:separable_convex_problem}. For example, (augmented) Lagrangian
relaxation and subgradient methods of multipliers \cite{Bertsekas1989,Hamdi2005,Ruszczynski1995,Vania2009}, Fenchel's dual decomposition \cite{Han1988},
alternating linearization \cite{Boyd2011,Goldfarb2010,Kontogiorgis1996}, proximal point-type methods
\cite{Bertsekas2010,Chen1994,Tseng1997}, interior point methods \cite{Kojima1993,Zhao2005,Mehrotra2009,Tran2011}, mean value cross decomposition
\cite{Holmberg2006} and partial inverse method \cite{Spingarn1985} among many others have been proposed. 
Our motivation in this paper is to develop a numerical algorithm for solving \eqref{eq:separable_convex_problem} which can be implemented in a parallel or
distributed fashion.
Note that the approach presented in the present paper is different from splitting methods and alternating methods considered in the literature, see, e.g.
\cite{Boyd2011,Eckstein1992}. 

One of the classical approaches for solving \eqref{eq:separable_convex_problem} is Lagrangian dual decomposition. The main idea of this approach is to solve the
dual problem by means of a subgradient method. It has been recognized in practice that subgradient methods are usually slow and numerically sensitive to the
step size parameters. In the special case of a strongly convex objective function, the dual function is differentiable. Consequently, gradient schemes can be
applied to solve the dual problem.

Recently, Nesterov \cite{Nesterov2004} developed smoothing techniques for solving nonsmooth convex optimization problems based on the fast gradient scheme which
was introduced in his early work \cite{Nesterov1983}. 
The fast gradient schemes have been used in numerous applications including image processing, compressed sensing, networks and system identification
\cite{Alexandre2008,Bienstock2006,Han-Jakob2005,Hariharan2008,Goldfarb2010,Naveen2007}. 

Exploiting Nesterov's idea in \cite{Nesterov2005a}, Necoara and Suykens \cite{Necoara2008} applied a smoothing technique to the dual problem in the
framework of Lagrangian dual decomposition and then used the fast gradient scheme to maximize the smoothed function of the dual problem. This resulted in a new
variant of dual decomposition algorithms for solving separable convex optimization. The authors proved that the rate of convergence of their algorithm is
$O(\frac{1}{k})$ which is much better than $O(\frac{1}{\sqrt{k}})$ in the subgradient methods of multipliers, where $k$ is the iteration counter. 
A main disadvantage of this scheme is that the smoothness parameter requires to be given \textit{a priori}. Moreover, this parameter crucially depends on the given desired accuracy. Since the Lipschitz constant of the gradient of the objective function in the dual problem is inversely proportional to the smoothness parameter, the algorithm usually generates short steps towards a solution of the problem
although the rate of convergence is $O(\frac{1}{k})$.

To overcome this drawback, in this paper, we propose a new algorithm which combines three techniques: smoothing \cite{Nesterov2005a,Nesterov2005}, excessive gap
\cite{Nesterov2005} and Lagrangian dual decomposition \cite{Bertsekas1989} techniques. 
Instead of fixing the smoothness parameters, we update them dynamically at every iteration.
Even though the worst case complexity is $O(\frac{1}{\varepsilon})$, where $\varepsilon$ is a given tolerance,
the algorithms developed in this paper work better than the one in \cite{Necoara2008} and are more numerically robust in practice. 
Note that the computational cost of the proposed algorithms remains almost the same as in the proximal-center-based decomposition algorithm proposed in
\cite[Algorithm 3.2]{Necoara2008}.
(Algorithm 3.2 in \cite{Necoara2008} requires to compute an additional dual step). 
This algorithm is called \textrm{dual decomposition with primal update} (Algorithm \ref{alg:A1}). 
Alternatively, we apply the switching strategy of \cite{Nesterov2005} to obtain a decomposition algorithm with switching primal-dual update for solving problem \eqref{eq:separable_convex_problem}.
This algorithm differs from the one in \cite{Nesterov2005} at two points. First, the smoothness parameter is dynamically updated with an exact formula and
second the proximal-based mappings are used to handle the nonsmoothness of the objective function. 
The second point is more significant since, in practice, estimating the Lipschitz constants is not an easy task even if the objective function is
differentiable. 
The switching algorithm balances the disadvantage of the decomposition methods using the primal update (Algorithm \ref{alg:A1}) and the dual update (Algorithm
3.2 \cite{Necoara2008}). Proximal-based mapping only plays a role of handling the nonsmoothness of the objective function. Therefore,
the algorithms developed in this paper do not belong to any proximal-point algorithm class considered in the literature.
Note also that all algorithms developed in this paper are first order methods which can be highly distributed.

\vskip0.1cm
\noindent\textbf{Contribution. }
The contribution of this paper is the following:
\begin{enumerate}
\item We apply the Lagrangian relaxation, smoothing and excessive gap techniques to large-scale separable convex optimization problems which are not necessarily smooth. 
Note that the excessive gap condition that we use in this paper is different from the one in \cite{Nesterov2005}, where not only the duality gap is measured but also the feasibility gap is used in the framework of constrained optimization, see \eqref{eq:smooth_f}.
\item We propose two algorithms for solving  general separable convex optimization problems. The first algorithm is new, while the second one is a new variant
of the first algorithm proposed in \cite[Algorithm 1]{Nesterov2005} applied to Lagrangian dual decomposition. A special case of the
algorithms, where the objective is strongly convex is considered. All the algorithms are highly parallelizable and distributed.
\item The convergence of the algorithms is proved and the rate of convergence is estimated. Implementation details are discussed and a 
theoretical and numerical comparison is made. 
\end{enumerate}
The rest of this paper is organized as follows. 
In the next section, we briefly describe the Lagrangian dual decomposition method \cite{Bertsekas1989} for separable convex optimization, the smoothing technique via prox-functions as well as excessive gap techniques \cite{Nesterov2005}. We also provide several technical lemmas which will be used in the sequel. 
Section \ref{sec:decom_primal_acceleration} presents a new algorithm called  \textit{decomposition algorithm with primal update} and estimates its worst-case
complexity. 
Section \ref{sec:decom_primal_dual_acceleration} is a combination of the primal and the dual step update schemes which is called  \textit{decomposition
algorithm with primal-dual update}.
Section \ref{sec:strongly_convex_case} is an application of the dual scheme \eqref{eq:dual_step} to the strongly convex case of problem \eqref{eq:distNLP}.   
We also discuss the implementation issues of the proposed algorithms and a theoretical comparison of Algorithms \ref{alg:A1} and
\ref{alg:A2} in Section \ref{sec:discussion}.
Numerical examples are presented in Section \ref{sec:num_results} to examine the performance of the proposed algorithms and  to compare different methods.

\vskip 0.1cm
\noindent\textbf{Notation. } Throughout the paper, we shall consider the Euclidean space $\mathbb{R}^n$ endowed with an inner product $x^Ty$ for $x,
y\in\mathbb{R}^n$ and the norm $\norm{x} := \sqrt{x^Tx}$. 
Associated with $\norm{\cdot}$, $\norm{\cdot}_{*} := \max\left\{(\cdot)^Tx ~:~ \norm{x}\leq 1\right\}$ defines its dual norm. For simplicity of
discussion, we use the Euclidean norm in the whole paper. Hence, $\norm{\cdot}_{*}$ is equivalent to $\norm{\cdot}$. The notation $x = (x_1, \dots, x_M)$
represents a column vector in $\mathbb{R}^n$, where $x_i$ is a subvector in $\mathbb{R}^{n_i}$, $i=1,\dots, M$ and $n_1+\cdots + n_M = n$.

\section{Lagrangian dual decomposition and excessive gap smoothing technique}\label{sec:decom_smooth}
A classical technique to address coupling constraints in optimization is Lagrangian relaxation \cite{Bertsekas1989}. However, this technique often leads to a nonsmooth optimization problem in the dual form. To overcome this situation, we combine the Lagrangian dual decomposition and smoothing technique in \cite{Nesterov2005a,Nesterov2005} to obtain a smoothly approximate dual problem.

For simplicity of discussion, we consider problem \eqref{eq:separable_convex_problem} with $M=2$. However, the methods presented in the next sections can be
directly applied to the case $M > 2$ (see Section \ref{sec:discussion}). 
The problem \eqref{eq:separable_convex_problem} can be rewritten as follows:
\begin{equation}\label{eq:distNLP}
\phi^{*} := \left\{\begin{array}{cl}
\displaystyle\min_{x:=(x_{1}, x_{2})} & \phi(x):=\phi_1(x_{1}) + \phi_2(x_{2})\\
\textrm{s.t.} &A_{1}x_{1} + A_{2}x_{2} = b\\
&x \in X_{1}\times X_{2} := X,
\end{array}\right.
\end{equation}
where $\phi_i : \mathbb{R}^{n_i}\to\mathbb{R}$ is convex, $X_{i}$ is a nonempty, closed, convex and bounded subset in $\mathbb{R}^{n_i}$, $A_{i}\in\mathbb{R}^{m\times n_i}$ and $b\in\mathbb{R}^m$
($i=1,2$).
Problem \eqref{eq:distNLP} is said to satisfy the Slater constraint qualification condition if $\textrm{ri}(X)\cap \{x =(x_1,x_2) ~|~ A_1x_1 + A_2x_2 = b\} \neq \emptyset$, where
$\textrm{ri}(X)$ is the relative interior of the convex set $X$. 
Let us denote by $X^{*}$ the solution set of this problem.
Throughout the paper, we assume that:
\begin{assumption}\label{as:A1}
The solution set $X^{*}$ is nonempty and either the Slater qualification condition for problem \eqref{eq:distNLP} holds or $X_{i}$ is polyhedral. The function $\phi_i$ is proper, lower semicontinuous and convex in $\mathbb{R}^n$, $i=1,2$. 
\end{assumption}
Since $X$ is convex and bounded, $X^{*}$ is also convex and bounded. 
Note that the objective function $\phi$ is not necessarily smooth. For example, $\phi(x) = \norm{x}_1 = \sum_{i=1}^n|x_{(i)}|$, which is is nonsmooth and separable.

\subsection{Decomposition via Lagrangian relaxation}
Let us define the Lagrange function of the problem \eqref{eq:distNLP} with respect to the coupling constraint $A_{1}x_{1} + A_{2}x_{2} = b$ as:
\begin{equation}\label{eq:Lagrange_func}
L(x, y) := \phi_1(x_{1}) + \phi_2(x_{2}) + y^T(A_{1}x_{1} + A_{2}x_{2} - b),
\end{equation}
where $y \in\mathbb{R}^m$ is the multiplier associated with the coupling constraint $A_{1}x_{1} + A_{2}x_{2} = b$.
A triplet $(x^{*}_{1}, x^{*}_{2}, y^{*}) \in X \times \mathbb{R}^m$ is called a saddle point of $L$ if:
\begin{equation}\label{eq:saddle_point}
L(x^{*}, y) \leq L(x^{*}, y^{*}) \leq L(x, y^{*}), ~ \forall x\in X, ~\forall y\in\mathbb{R}^m.
\end{equation}
Next, we define the Lagrange dual function $d$ of the problem \eqref{eq:distNLP} as:
\begin{equation}\label{eq:dual_func}
d(y) := \min_{x\in X}\left\{L(x, y) := \phi_1(x_{1}) + \phi_2(x_{2}) + y^T(A_{1}x_{1} + A_{2}x_{2} - b) \right\}. 
\end{equation}
and then write down the dual problem of \eqref{eq:distNLP}:
\begin{equation}\label{eq:dual_prob}
d^{*} := \max_{y\in\mathbb{R}^m} d(y). 
\end{equation}
Let $A = [A_{1}, A_{2}]$.
Due to Assumption A.\ref{as:A1} \textit{strong duality} holds and we have:
\begin{equation}\label{eq:minmax_prob}
d^{*} = \max_{y\in\mathbb{R}^m}d(y) \overset{\tiny\textrm{strong~duality}}{=} \min_{x\in X}\left\{ \phi(x) ~|~ Ax = b\right\} = \phi^{*}. 
\end{equation}
Let us denote by $Y^{*}$ the solution set of the dual problem \eqref{eq:dual_prob}. It is well-known that $Y^{*}$ is bounded due to Assumption A.\ref{as:A1}.

Now, let us consider the dual function $d$ defined by \eqref{eq:dual_func}. It is important to note that the dual function $d(y)$ can be computed 
separately as: 
\begin{equation}\label{eq:dual_func2}
d(y) = d_1(y) + d_2(y) - b^Ty, 
\end{equation}
where
\begin{equation}\label{eq:dxdy}
d_i(y) := \min_{x_{i}\in X_{i}}\left\{\phi_i(x_{i}) + y^TA_{i}x_{i} \right\}, ~i=1,2. 
\end{equation}
We denote by $x_{i}^{*}(y)$ a solution of the minimization problem in \eqref{eq:dxdy} ($i=1,2$) and $x^{*}(y) := (x^{*}_{1}(y), x^{*}_{2}(y))$.
Since $\phi_i$ is continuous and $X_{i}$ is closed and bounded, this problem has a solution.
Note that if $x_{i}^{*}(y)$ is not uniques for a given $y$ then $d_i$ is not differentiable at the point $y$ ($i=1,2$).
Consequently, $d$ is not differentiable at $y$. The representation \eqref{eq:dual_func2}-\eqref{eq:dxdy} is called a \textit{dual decomposition} of the dual
function $d$.

\subsection{Smoothing the dual function via prox-functions} 
By assumption that $X_{i}$ is bounded, instead of considering the nonsmooth function $d$, we smooth the dual function $d$ by means of prox-functions.
A function $p_i$ is called a proximity function (prox-function) of a given nonempty, closed and bounded convex set $X_i\subset\mathbb{R}^{n_i}$ if $p_i$ is
continuous, strongly convex with convexity parameter $\sigma_i > 0$ and $X_i \subseteq\mathrm{dom}(p_i)$.
 
Suppose that $p_i$ is a prox-function of $X_{i}$ and  $\sigma_i > 0$ is its convexity parameter ($i=1,2$).
Let us consider the following functions:
\begin{align}
&d_i(y; \beta_1) := \min_{x_{i}\in X_{i}}\left\{\phi_i(x_{i}) + y^TA_{i}x_{i} + \beta_1p_i(x_{i})\right\}, ~i=1,2, \label{eq:d_lbdxdy}\\
&d(y; \beta_1) := d_1(y; \beta_1) + d_2(y; \beta_1) - b^Ty. \label{eq:d_lbd} 
\end{align}
Here, $\beta_1>0$ is a given parameter called smoothness parameter.
We denote by $x_i^{*}(y;\beta_1)$  the solution of \eqref{eq:d_lbdxdy}, i.e.:
\begin{eqnarray}\label{eq:Txy}
&x^{*}_i(y; \beta_1) := \textrm{arg}\!\!\!\displaystyle\min_{x_{i}\in X_{i}}\left\{\phi_i(x_{i}) + y^TA_{i}x_{i} + \beta_1p_i(x_{i})\right\}, ~i=1, 2.
\end{eqnarray}
Note that it is possible to use different parameters $\beta_1^{i}$ for \eqref{eq:d_lbdxdy} ($i=1,2$).

Let $x_{i}^c$ be the prox-center  of $X_{i}$ which is defined as:
\begin{equation}\label{eq:x_0y_0}
x_{i}^c = \textrm{arg}\!\!\min_{x_{i}\in X_{i}}p_i(x_{i}), ~i=1, 2.
\end{equation}
Without loss of generality, we can assume that $p_i(x_{i}^c) = 0$.  Since $X_{i}$ is bounded, the quantity
\begin{equation}\label{eq:DxDy}
D_i := \max_{x_{i}\in X_{i}}p_i(x_{i})
\end{equation}
is well-defined and $0\leq D_i < +\infty$ for $i=1,2$.
The following lemma shows the main properties of $d(\cdot; \beta_1)$, whose proof can be found, e.g., in \cite{Necoara2008,Nesterov2005}.

\begin{lemma}\label{le:smoothing_estimate}
For any $\beta_1 > 0$, the function $d_i(\cdot; \beta_1)$ defined by \eqref{eq:d_lbdxdy} is well-defined and continuously differentiable on $\mathbb{R}^m$.
Moreover, this function is concave and its gradient w.r.t $y$ is given as: 
\begin{equation}\label{eq:grad_d_xdy}
\nabla d_i(y; \beta_1) = A_{i}x_{i}^{*}(y; \beta_1), ~i=1, 2, 
\end{equation}
which is Lipschitz continuous with a Lipschitz constant $L_i^{d}(\beta_1) = \frac{\norm{A_{i}}^2}{\beta_1\sigma_i}$ ($i=1,2$).
The following estimates hold:
\begin{equation}\label{eq:dxdy_estimate}
d_i(y; \beta_1) \geq d_i(y) \geq d_i(y; \beta_1) - \beta_1D_i, ~i=1,2. 
\end{equation}
Consequently, the function $d(\cdot;\beta_1)$ defined by \eqref{eq:d_lbd} is concave and differentiable and its gradient is given by
$\nabla{d}(y;\beta_1) :=
Ax^{*}(y;\beta_1) - b$ which is Lipschitz continuous with a Lipschitz constant $L^d(\beta_1) := \frac{1}{\beta_1}\sum_{i=1}^2\frac{\norm{A_i}^2}{\sigma_i}$.
Moreover, it holds that:
\begin{equation}\label{eq:d_estimate} 
d(y; \beta_1) \geq d(y) \geq d(y; \beta_1) - \beta_1(D_1 + D_2).
\end{equation}
\end{lemma}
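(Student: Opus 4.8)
The plan is to establish the statement component-by-component for $d_i(\cdot;\beta_1)$ and then assemble the claims about $d(\cdot;\beta_1)$ by summation. The starting point is that, for fixed $y$, the inner objective $F^i_y(x_i) := \phi_i(x_i) + y^TA_ix_i + \beta_1 p_i(x_i)$ is strongly convex in $x_i$ with parameter $\beta_1\sigma_i > 0$: indeed $\phi_i$ is convex, $y^TA_ix_i$ is linear, and $\beta_1 p_i$ is strongly convex with parameter $\beta_1\sigma_i$. Being continuous and strongly convex over the nonempty, closed, bounded convex set $X_i$, it attains a \emph{unique} minimizer, so $x^{*}_i(y;\beta_1)$ is well-defined. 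Writing $d_i(y;\beta_1)$ as the pointwise infimum over $x_i\in X_i$ of functions that are affine in $y$ exhibits it as concave. For differentiability I would invoke the envelope (Danskin-type) theorem for optimal-value functions: since the inner minimizer is unique and $F^i_y$ is smooth in $y$, the function $d_i(\cdot;\beta_1)$ is differentiable with $\nabla_y d_i(y;\beta_1) = \nabla_y F^i_y(x^{*}_i(y;\beta_1)) = A_i x^{*}_i(y;\beta_1)$, which is exactly \eqref{eq:grad_d_xdy}.

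The genuinely delicate step is the Lipschitz continuity of this gradient, which I would reduce to showing that the minimizer map $y\mapsto x^{*}_i(y;\beta_1)$ is Lipschitz with constant $\norm{A_i}/(\beta_1\sigma_i)$. Fix $y,\bar y$ and set $x := x^{*}_i(y;\beta_1)$, $\bar x := x^{*}_i(\bar y;\beta_1)$. The key tool is the two-sided strong-convexity/optimality inequality: for a strongly convex $F^i_y$ minimized at $x$ over the convex set $X_i$ one has $F^i_y(\bar x)\ge F^i_y(x)+\tfrac{\beta_1\sigma_i}{2}\norm{\bar x - x}^2$, and symmetrically $F^i_{\bar y}(x)\ge F^i_{\bar y}(\bar x)+\tfrac{\beta_1\sigma_i}{2}\norm{x-\bar x}^2$ (this form holds even with nonsmooth $\phi_i$, via the subgradient at the minimizer). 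Adding the two inequalities cancels the common $\phi_i$ and $\beta_1 p_i$ terms and leaves only the linear parts, yielding $(y-\bar y)^TA_i(\bar x - x)\ge \beta_1\sigma_i\norm{x-\bar x}^2$. Cauchy--Schwarz together with $\norm{A_i(\bar x - x)}\le\norm{A_i}\,\norm{\bar x - x}$ then gives $\norm{x-\bar x}\le \frac{\norm{A_i}}{\beta_1\sigma_i}\norm{y-\bar y}$, and multiplying by $\norm{A_i}$ produces $\norm{\nabla d_i(y;\beta_1)-\nabla d_i(\bar y;\beta_1)} \le \frac{\norm{A_i}^2}{\beta_1\sigma_i}\norm{y-\bar y}$, i.e. the stated constant $L_i^{d}(\beta_1)$.

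For the sandwich estimate \eqref{eq:dxdy_estimate} I would use only $0\le p_i(x_i)\le D_i$ on $X_i$, which holds since $p_i(x_i^c)=0$ is the minimum value and $D_i=\max_{x_i\in X_i}p_i(x_i)$. Nonnegativity of $\beta_1 p_i$ immediately gives $d_i(y;\beta_1)\ge d_i(y)$, while evaluating the smoothed objective at the unsmoothed minimizer $x^{*}_i(y)$ yields $d_i(y;\beta_1)\le d_i(y)+\beta_1 D_i$, the lower bound. Finally I would assemble the results for $d(\cdot;\beta_1)=d_1(\cdot;\beta_1)+d_2(\cdot;\beta_1)-b^Ty$: concavity and differentiability are preserved under the sum (with $-b^Ty$ linear), the gradients add to give $\nabla d(y;\beta_1)=A_1x^{*}_1(y;\beta_1)+A_2x^{*}_2(y;\beta_1)-b = Ax^{*}(y;\beta_1)-b$, and the triangle inequality bounds the Lipschitz constant of the sum by $L_1^{d}(\beta_1)+L_2^{d}(\beta_1)=\frac{1}{\beta_1}\sum_{i=1}^2\frac{\norm{A_i}^2}{\sigma_i}$. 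Summing the two component sandwich bounds and subtracting $b^Ty$ produces \eqref{eq:d_estimate}. The only step requiring real care is the gradient-Lipschitz estimate of the second paragraph; everything else is bookkeeping built on strong convexity and the bound $0\le p_i\le D_i$.
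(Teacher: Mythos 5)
Your proof is correct. The paper itself omits the proof and defers to \cite{Necoara2008,Nesterov2005}, and your argument — unique minimizer from strong convexity, concavity as an infimum of affine functions, Danskin for the gradient formula, the two-sided optimality/strong-convexity inequality plus Cauchy--Schwarz for the Lipschitz constant, and $0\leq p_i\leq D_i$ for the sandwich bounds — is precisely the standard argument given in those references.
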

The inequalities \eqref{eq:d_estimate} show that $d(\cdot; \beta_1)$ is an approximation of $d$. Moreover, $d(\cdot; \beta_1)$
converges to $d$ as $\beta_1$ tends to zero.

\begin{remark}\label{re:bound_XY}
Even without the assumption that $X$ is bounded, if the solution set $X^{*}$ of \eqref{eq:distNLP} is bounded then, in principle, we can bound the feasible set $X$ by a large compact set which contains all the sampling points generated by the
algorithms (see Section \ref{sec:decom_primal_dual_acceleration} below). 
However, in the following algorithms we do not use $D_i$, $i=1,2$ (defined by \eqref{eq:DxDy}) in any computational step. They only appear in the theoretical
complexity estimates. 
\end{remark}

Next, for a given $\beta_2 > 0$, we define a mapping $\psi(\cdot; \beta_2)$ from $X$ to $\mathbb{R}$ by:
\begin{equation}\label{eq:psi}
\psi(x; \beta_2) := \max_{y \in \mathbb{R}^m}\left\{ (Ax - b)^Ty - \frac{\beta_2}{2}\norm{y}^2 \right\}.
\end{equation}
This function can be considered as an approximate version of $\psi(x) := \displaystyle\max_{y\in\mathbb{R}^m}\left\{(Ax-b)^Ty \right\}$ using the prox-function $p(y) := \frac{1}{2}\norm{y}^2$.
It is easy to show that the unique solution of the maximization problem in \eqref{eq:psi} is given explicitly as $y^{*}(x;\beta_2) = \frac{1}{\beta_2}(Ax - b)$
and $\psi(x;\beta_2) = \frac{1}{2\beta_2}\norm{Ax-b}^2$.
Therefore,
$\psi(\cdot; \beta_2)$ is well-defined and differentiable on $X$.
Let
\begin{equation}\label{eq:f_beta2}
f(x; \beta_2) := \phi(x) + \psi(x; \beta_2) = \phi(x) + \frac{1}{2\beta_2}\norm{Ax-b}^2.
\end{equation}
The  next lemma summarizes the properties of $\psi(\cdot; \beta_2)$. 

\begin{lemma}\label{le:Lipschitz_diff_psi}
For any $\beta_2 > 0$, the function $\psi(\cdot;\beta_2)$ defined by \eqref{eq:psi} is continuously differentiable on $X$ and its gradient is given by:
\begin{equation}\label{eq:d_psi}
\nabla\psi(x; \beta_2) = (\nabla_{x_{1}}\psi(x; \beta_2), \nabla_{x_{2}}\psi(x;\beta_2)) = (A_{1}^Ty^{*}(x; \beta_2),~ A_{2}^Ty^{*}(x; \beta_2)),
\end{equation}
which is Lipschitz continuous with a Lipschitz constant $L^{\psi}(\beta_2) := \frac{1}{\beta_2}(\norm{A_{1}}^2 + \norm{A_{2}}^2)$.
Moreover, the following estimate holds for all $x, \hat{x} \in X$:
\begin{eqnarray}\label{eq:estimate_psi}
\psi(x; \beta_2) &&\leq \psi(\hat{x}; \beta_2) + \nabla_1\psi(\hat{x}; \beta_2)^T(x_{1}-\hat{x}_{1}) + \nabla_2\psi(\hat{x}; \beta_2)^T(x_{2}-\hat{x}_{2}) \nonumber\\
[-1.5ex]\\[-1.5ex]
&& + \frac{L_1^{\psi}(\beta_2)}{2}\norm{x_{1} \!-\! \hat{x}_{1}}^2 \!+\! \frac{L_2^{\psi}(\beta_2)}{2}\norm{x_{2} \!-\! \hat{x}_{2}}^2, \nonumber
\end{eqnarray}
where $L_1^{\psi}(\beta_2) := \frac{2}{\beta_2}\norm{A_{1}}^2$ and $L_2^{\psi}(\beta_2) := \frac{2}{\beta_2}\norm{A_{2}}^2$.
\end{lemma}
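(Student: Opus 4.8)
The plan is to exploit the closed-form expression for $\psi(\cdot;\beta_2)$ that immediately precedes the lemma. Since the inner maximization in \eqref{eq:psi} is a strongly concave quadratic in $y$, it has the unique maximizer $y^{*}(x;\beta_2) = \frac{1}{\beta_2}(Ax-b)$, and substituting this back gives the explicit form $\psi(x;\beta_2) = \frac{1}{2\beta_2}\norm{Ax-b}^2$. This reduces the whole lemma to differentiating and estimating an explicit convex quadratic, so no Danskin-type argument is required.

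First I would settle differentiability and the gradient formula. Writing $\psi(x;\beta_2) = \frac{1}{2\beta_2}(Ax-b)^T(Ax-b)$ and differentiating gives $\nabla\psi(x;\beta_2) = \frac{1}{\beta_2}A^T(Ax-b) = A^Ty^{*}(x;\beta_2)$, which is manifestly continuous in $x$. Partitioning $A = [A_1,A_2]$, so that $A^T$ simply stacks $A_1^T$ and $A_2^T$, produces the block form $\nabla_{x_i}\psi(x;\beta_2) = A_i^Ty^{*}(x;\beta_2)$ of \eqref{eq:d_psi} at once.

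Second, for the Lipschitz constant I would subtract the gradients at two points $x,\hat{x}$ to get $\nabla\psi(x;\beta_2) - \nabla\psi(\hat{x};\beta_2) = \frac{1}{\beta_2}A^TA(x-\hat{x})$, whence $\norm{\nabla\psi(x;\beta_2) - \nabla\psi(\hat{x};\beta_2)} \leq \frac{1}{\beta_2}\norm{A}^2\norm{x-\hat{x}}$. It then remains to bound the operator norm: since $AA^T = A_1A_1^T + A_2A_2^T$, the triangle inequality yields $\norm{A}^2 = \norm{AA^T} \leq \norm{A_1}^2 + \norm{A_2}^2$, which gives the constant $L^{\psi}(\beta_2) = \frac{1}{\beta_2}(\norm{A_1}^2+\norm{A_2}^2)$ claimed for the joint gradient.

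The main point of the lemma, and the step needing the most care, is the block-separated estimate \eqref{eq:estimate_psi}, whose per-block constants $L_i^{\psi}(\beta_2) = \frac{2}{\beta_2}\norm{A_i}^2$ carry a factor of two. Invoking the generic descent lemma would only give a single combined constant and would not deliver the clean block splitting, so I would instead expand $\psi$ exactly. Setting $u := x_1-\hat{x}_1$, $v := x_2-\hat{x}_2$ and $\hat{r} := A\hat{x} - b$, the identity $Ax-b = \hat{r} + A_1u + A_2v$ yields, upon expanding $\frac{1}{2\beta_2}\norm{\hat{r} + A_1u + A_2v}^2$, the exact equality
\begin{equation*}
\psi(x;\beta_2) = \psi(\hat{x};\beta_2) + \nabla_1\psi(\hat{x};\beta_2)^Tu + \nabla_2\psi(\hat{x};\beta_2)^Tv + \frac{1}{2\beta_2}\norm{A_1u + A_2v}^2,
\end{equation*}
where the two linear terms are identified with the block gradients through the formula derived above. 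The obstacle is the cross term in $\norm{A_1u + A_2v}^2 = \norm{A_1u}^2 + 2(A_1u)^T(A_2v) + \norm{A_2v}^2$, which couples the two blocks; I would eliminate it by the crude bound $\norm{A_1u + A_2v}^2 \leq 2\norm{A_1u}^2 + 2\norm{A_2v}^2 \leq 2\norm{A_1}^2\norm{u}^2 + 2\norm{A_2}^2\norm{v}^2$. This is exactly where the factor of two originates; dividing by $2\beta_2$ reproduces $\frac{L_1^{\psi}(\beta_2)}{2}\norm{u}^2 + \frac{L_2^{\psi}(\beta_2)}{2}\norm{v}^2$ and completes the estimate. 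I expect this expansion, rather than any deep step, to be the crux: the subtlety is purely in choosing the split that decouples the blocks at the price of the factor two.
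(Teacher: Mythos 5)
Your proposal is correct and follows essentially the same route as the paper: the authors also reduce everything to the explicit quadratic $\psi(x;\beta_2)=\frac{1}{2\beta_2}\norm{A_1x_1+A_2x_2-b}^2$, observe that the second-order remainder equals $\frac{1}{2\beta_2}\norm{A_1(x_1-\hat{x}_1)+A_2(x_2-\hat{x}_2)}^2$ exactly, and then decouple the blocks with the same bound $\norm{a+b}^2\leq 2\norm{a}^2+2\norm{b}^2$, which is precisely where the factor of two in $L_i^{\psi}(\beta_2)$ comes from. No discrepancies to report.
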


\begin{proof}
Since $\psi(x; \beta_2) = \frac{1}{2\beta_2}\norm{A_{1}x_{1} + A_{2}x_{2} - b}^2$ by the definition \eqref{eq:psi} and $y^{*}(x; \beta_2) = \frac{1}{\beta_2}(A_{1}x_{1} + A_{2}x_{2} -
b)$, it is easy to compute directly $\nabla\psi(\cdot; \beta_2)$. Moreover, we have:
\begin{eqnarray}\label{eq:convexity_psi}
\psi(x; \beta_2) \!-\! \psi(\hat{x}; \beta_2) \!-\! \nabla\psi(\hat{x}; \beta_2)^T(x \!-\! \hat{x})  \!\! && = \frac{1}{2\beta_2}\norm{A_{1}(x_{1} -\hat{x}_{1}) + A_{2}(x_{2}
-\hat{x}_{2})}^2 \nonumber\\
[-1.5ex]\\[-1.5ex]
&&\leq \frac{1}{\beta_2}\norm{A_{1}}^2\norm{x_{1} - \hat{x}_{1}}^2 + \frac{1}{\beta_2}\norm{A_{2}}^2\norm{x_{2} - \hat{x}_{2}}^2.\nonumber
\end{eqnarray}
This inequality is indeed \eqref{eq:estimate_psi}.
\Eproof
\end{proof}
From the definition of $f(\cdot; \beta_2)$, we obtain:
\begin{equation}\label{eq:smooth_f}
f(x;  \beta_2) - \frac{1}{2\beta_2}\norm{Ax - b}^2 = \phi(x) \leq f(x; \beta_2). 
\end{equation}
Note that $f(\cdot; \beta_2)$ is an upper bound of $\phi(\cdot)$ instead of a lower bound as in \cite{Nesterov2005}.
Note that the Lipschitz constants in \eqref{eq:estimate_psi} are roughly estimated. These quantities can be quantified carefully
by taking into account the problem structure to trade-off the computational effort in each component subproblem.

\subsection{Excessive gap technique}
Since the primal-dual gap of the primal and dual problems \eqref{eq:distNLP}-\eqref{eq:dual_prob} is measured by $ g(x,y) := \phi(x) - d(y)$, if the gap $g$ is equal to zero
for some feasible point $x$ and $y$ then this point is an optimal solution of \eqref{eq:distNLP}-\eqref{eq:dual_prob}. 
In this section, we apply to the Lagrangian dual decomposition framework a technique called \textit{excessive gap} proposed by Nesterov in \cite{Nesterov2005}.

Let us consider $\hat{d}(y; \beta_1) := d(y; \beta_1) - \beta_1(D_1 + D_2)$.
It follows from \eqref{eq:d_estimate} and \eqref{eq:smooth_f} that $\hat{d}(\cdot; \beta_1)$ is an underestimate of $d(\cdot)$, while  $f(\cdot; \beta_2)$ is an overestimate of $\phi(\cdot)$.
Therefore, $0 \leq g(x,y) = \phi(x) - d(y) \leq f(x; \beta_2) - d(y; \beta_1) + \beta_1(D_1 + D_2)$.
Let us recall the following excessive gap condition introduced in \cite{Nesterov2005}.
\begin{definition}\label{de:excessive_gap}
We say that a point $(\bar{x}, \bar{y})\in X\times\mathbb{R}^m$ satisfies the \textit{excessive gap} condition  with respect to two smoothness parameters
$\beta_1 > 0$ and $\beta_2 > 0$ if:
\begin{equation}\label{eq:excessive_gap}
f(\bar{x}; \beta_2) \leq d(\bar{y}; \beta_1),
\end{equation}
where $f(\cdot;\beta_2)$ and $d(\cdot;\beta_1)$ are defined by \eqref{eq:smooth_f} and \eqref{eq:d_lbd}, respectively.
\end{definition}
The following lemma provides an upper bound estimate for the duality gap and the feasibility gap of problem \eqref{eq:distNLP}.

\begin{lemma}\label{le:excessive_gap}
Suppose that $(\bar{x}, \bar{y}) \in X\times\mathbb{R}^m$ satisfies the excessive gap condition \eqref{eq:excessive_gap}. Then for any $y^{*}\in
Y^{*}$, we have:
\begin{eqnarray}
&-\norm{y^{*}}\norm{A\bar{x} \!-\! b} &\!\leq\! \phi(\bar{x}) \!-\! d(\bar{y}) \!\leq\! \beta_1(D_1 \!+\! D_2) \!-\! \frac{1}{2\beta_2}\norm{A\bar{x} \!-\! b}^2
\!\leq\! \beta_1(D_1 \!+\! D_2)\label{eq:dual_gap},\\
[-1.5ex]
\textrm{and} ~~&& \nonumber\\
[-1.5ex]
&\norm{A\bar{x} - b} &\leq \beta_2\left[\norm{y^{*}} + \sqrt{\norm{y^{*}}^2+\frac{2\beta_1}{\beta_2}(D_1 + D_2)}\right]. \label{eq:feasibility}
\end{eqnarray}
\end{lemma}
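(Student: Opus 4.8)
The plan is to establish the two-sided bound on the primal-dual gap $\phi(\bar{x}) - d(\bar{y})$ first, and then to convert the resulting inequalities into a scalar quadratic inequality in the feasibility residual $\norm{A\bar{x} - b}$, which I can solve explicitly.

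For the upper bound in \eqref{eq:dual_gap}, I would start directly from the excessive gap condition \eqref{eq:excessive_gap}. Substituting $f(\bar{x}; \beta_2) = \phi(\bar{x}) + \frac{1}{2\beta_2}\norm{A\bar{x} - b}^2$ from \eqref{eq:f_beta2} and invoking the right-hand part of \eqref{eq:d_estimate} in the rearranged form $d(\bar{y}; \beta_1) \leq d(\bar{y}) + \beta_1(D_1 + D_2)$, the condition $f(\bar{x};\beta_2) \leq d(\bar{y};\beta_1)$ becomes
\[
\phi(\bar{x}) + \tfrac{1}{2\beta_2}\norm{A\bar{x} - b}^2 \leq d(\bar{y}) + \beta_1(D_1 + D_2).
\]
Moving the quadratic term to the right reproduces the middle inequality of \eqref{eq:dual_gap}, and discarding the nonnegative term $\frac{1}{2\beta_2}\norm{A\bar{x}-b}^2$ yields the final inequality $\phi(\bar{x}) - d(\bar{y}) \leq \beta_1(D_1+D_2)$.

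For the lower bound I would exploit weak duality together with the definition of the dual function. For any $y^{*}\in Y^{*}$, the estimate $d(y^{*}) = \min_{x\in X}L(x,y^{*}) \leq L(\bar{x},y^{*}) = \phi(\bar{x}) + (y^{*})^T(A\bar{x} - b)$ follows from \eqref{eq:dual_func}, and since $d(\bar{y}) \leq d^{*} = d(y^{*})$ I obtain $\phi(\bar{x}) - d(\bar{y}) \geq -(y^{*})^T(A\bar{x}-b)$. A single application of the Cauchy--Schwarz inequality then gives the leftmost bound $-\norm{y^{*}}\norm{A\bar{x}-b} \leq \phi(\bar{x}) - d(\bar{y})$, completing \eqref{eq:dual_gap}.

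The final step is to chain the two extreme members of \eqref{eq:dual_gap}. Writing $r := \norm{A\bar{x}-b} \geq 0$, they combine into $-\norm{y^{*}}r \leq \beta_1(D_1 + D_2) - \frac{1}{2\beta_2}r^2$, i.e. $r^2 - 2\beta_2\norm{y^{*}}r - 2\beta_1\beta_2(D_1 + D_2) \leq 0$ after multiplying through by $2\beta_2 > 0$. Since $r$ must lie between the two roots of this quadratic and $r \geq 0$, it is dominated by the larger root $\beta_2\norm{y^{*}} + \sqrt{\beta_2^2\norm{y^{*}}^2 + 2\beta_1\beta_2(D_1+D_2)}$; factoring $\beta_2$ out of the square root gives precisely the right-hand side of \eqref{eq:feasibility}. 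The computations are elementary throughout; the only points requiring care are the correct orientation of \eqref{eq:d_estimate} and the observation that $d(\bar{y}) \leq d^{*}$, so that the nonnegative dual slack $d^{*} - d(\bar{y})$ can be dropped without reversing an inequality. I do not expect any genuine obstacle here.
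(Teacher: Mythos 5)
Your proposal is correct and follows essentially the same route as the paper's own proof: weak duality plus Cauchy--Schwarz for the lower bound, the excessive gap condition combined with \eqref{eq:d_estimate} and \eqref{eq:f_beta2} for the upper bound, and the resulting scalar quadratic inequality in $\norm{A\bar{x}-b}$ for \eqref{eq:feasibility}. No issues.
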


\begin{proof}
Suppose that $\bar{x}$ and $\bar{y}$ satisfy condition \eqref{eq:excessive_gap}.
For a given $y^{*}\in Y^{*}$, one has:
\begin{align*}
d(\bar{y})&\leq d(y^{*}) = \min_{x\in X}\left\{\phi(x) + (Ax-b)^Ty^{*}\right\} \leq \phi(\bar{x}) + (A\bar{x} - b)^Ty^{*}\notag\\
&\leq \phi(\bar{x}) + \norm{A\bar{x} - b}\norm{y^{*}}, 
\end{align*}
which implies the first inequality of \eqref{eq:dual_gap}.
By using Lemma \ref{le:smoothing_estimate} and \eqref{eq:f_beta2} we have:
\begin{align*}
\phi(\bar{x}) - d(\bar{y}) \overset{\tiny\eqref{eq:d_estimate}+\eqref{eq:smooth_f}}{\leq} f(\bar{x}; \beta_2) - d(\bar{y}; \beta_1) + \beta_1(D_1 + D_2) -
\frac{1}{2\beta_2}\norm{A\bar{x} - b}^2. 
\end{align*}
Now, by substituting the condition \eqref{eq:excessive_gap} into this inequality, we obtain the second inequality of \eqref{eq:dual_gap}.
Let $\eta := \norm{Ax - b}$. It follows from \eqref{eq:dual_gap} that $\eta^2 - 2\beta_2\norm{y^{*}}\eta - 2\beta_1\beta_2(D_1+D_2) \leq 0$.
The estimate \eqref{eq:feasibility} follows from this inequality after few simple calculations.
\Eproof
\end{proof}

\section{New decomposition algorithm}\label{sec:decom_primal_acceleration}
In this section, we derive an iterative decomposition algorithm for solving \eqref{eq:distNLP} based on the excessive gap technique. This method is called a \textit{decomposition algorithm with primal update}.
The aim is to generate a point $(\bar{x}, \bar{y})\in X\times\mathbb{R}^m$ at each iteration such that this point maintains the excessive gap condition \eqref{eq:excessive_gap} while the algorithm drives the parameters $\beta_1$ and $\beta_2$ to zero.

\subsection{Proximal mappings}
As assumed earlier, the function $\phi_i$ is convex but not necessarily differentiable. Therefore, we can not use the gradient information of these functions. 
We consider the following mappings ($i=1,2$):
\begin{align}
&P_i(\hat{x}; \beta_2) := \textrm{arg}\!\!\min_{x_{i}\in X_{i}}\left\{\phi_i(x_{i}) + y^{*}(\hat{x}; \beta_2)^TA_{i}(x_{i} - \hat{x}_{i}) +
\frac{L_i^{\psi}(\beta_2)}{2}\norm{x_{i} - \hat{x}_{i}}^2\right\}, \label{eq:prox_mapping_xy}
\end{align}
where $y^{*}(\hat{x}; \beta_2) := \frac{1}{\beta_2}(A\hat{x} - b)$.
Since $L_i^{\psi}(\beta_2)$ defined in Lemma \ref{le:Lipschitz_diff_psi} is positive, $P_i(\cdot; \beta_2)$ is well-defined. 
This mapping is called \textit{proximal operator} \cite{Chen1994}. Let $P(\cdot; \beta_2) = (P_1(\cdot; \beta_2), P_2(\cdot; \beta_2))$.

First, we state that the excessive gap condition \eqref{eq:excessive_gap} is well-defined by showing that there exists a point $(\bar{x},\bar{y})$ that satisfies \eqref{eq:excessive_gap}.
This point will be used as a starting point in Algorithm \ref{alg:A1} described below. 

\begin{lemma}\label{le:intial_point}
Suppose that $x^c = (x_{1}^c; x_{2}^c)$ is the prox-center of $X$. For a given $\beta_2 > 0$, let us define:
\begin{equation}\label{eq:initial_point}
\bar{y} := \frac{1}{\beta_2}(Ax^c - b) ~~\mathrm{and} ~~\bar{x} := P(x^c; \beta_2).
\end{equation}
If the parameter $\beta_1$ is chosen such that:
\begin{equation}\label{eq:initial_point_cond}
\beta_1\beta_2 \geq 2\max_{1\leq i\leq 2}\left\{\frac{\norm{A_{i}}^2}{\sigma_i}\right\}, 
\end{equation}
then $(\bar{x}$, $\bar{y})$ satisfies the excessive gap condition \eqref{eq:excessive_gap}.
\end{lemma}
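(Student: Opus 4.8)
The plan is to bound $d(\bar{y};\beta_1)$ from below by $f(\bar{x};\beta_2)$, exploiting the fact that under condition \eqref{eq:initial_point_cond} the smoothed dual subproblems \eqref{eq:d_lbdxdy} dominate the proximal subproblems \eqref{eq:prox_mapping_xy} that define $\bar{x}$. First I would use the strong convexity of each prox-function together with the fact that $x_{i}^c$ minimizes $p_i$ over $X_i$ and $p_i(x_{i}^c)=0$ to obtain the pointwise bound $p_i(x_{i})\geq \frac{\sigma_i}{2}\norm{x_{i}-x_{i}^c}^2$ for all $x_{i}\in X_{i}$. Substituting this into the definition of $d_i(\bar{y};\beta_1)$ in \eqref{eq:d_lbdxdy} and using that a pointwise inequality is preserved under the minimum, I get
\[
d_i(\bar{y};\beta_1)\geq \min_{x_{i}\in X_{i}}\Big\{\phi_i(x_{i})+\bar{y}^TA_{i}x_{i}+\tfrac{\beta_1\sigma_i}{2}\norm{x_{i}-x_{i}^c}^2\Big\}.
\]
Condition \eqref{eq:initial_point_cond} reads exactly $\beta_1\sigma_i\geq 2\norm{A_{i}}^2/\beta_2 = L_i^{\psi}(\beta_2)$ for $i=1,2$, hence $\tfrac{\beta_1\sigma_i}{2}\geq \tfrac{L_i^{\psi}(\beta_2)}{2}$; since $\norm{x_{i}-x_{i}^c}^2\geq 0$, the bracketed function is bounded below by the proximal objective in \eqref{eq:prox_mapping_xy} (after discarding the constant $-\bar{y}^TA_{i}x_{i}^c$).

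Because $\bar{x}_i=P_i(x^c;\beta_2)$ minimizes that proximal objective, the previous chain yields $d_i(\bar{y};\beta_1)\geq \phi_i(\bar{x}_i)+\bar{y}^TA_{i}\bar{x}_i+\tfrac{L_i^{\psi}(\beta_2)}{2}\norm{\bar{x}_i-x_{i}^c}^2$. Summing over $i=1,2$ and subtracting $b^T\bar{y}$ as in \eqref{eq:d_lbd} gives
\[
d(\bar{y};\beta_1)\geq \phi(\bar{x})+\bar{y}^T(A\bar{x}-b)+\sum_{i=1}^2\tfrac{L_i^{\psi}(\beta_2)}{2}\norm{\bar{x}_i-x_{i}^c}^2.
\]
It then remains to show that the last two terms dominate $\frac{1}{2\beta_2}\norm{A\bar{x}-b}^2=\psi(\bar{x};\beta_2)$, so that the right-hand side is at least $f(\bar{x};\beta_2)=\phi(\bar{x})+\psi(\bar{x};\beta_2)$ by \eqref{eq:f_beta2}.

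For this final inequality I would invoke the quadratic upper bound \eqref{eq:estimate_psi} of Lemma \ref{le:Lipschitz_diff_psi} at the base point $\hat{x}=x^c$, where $\nabla_i\psi(x^c;\beta_2)=A_{i}^T\bar{y}$ since $\bar{y}=y^{*}(x^c;\beta_2)$ by \eqref{eq:initial_point}. This bounds $\psi(\bar{x};\beta_2)$ above by $\psi(x^c;\beta_2)+\bar{y}^TA(\bar{x}-x^c)+\sum_i\tfrac{L_i^{\psi}(\beta_2)}{2}\norm{\bar{x}_i-x_{i}^c}^2$. The key simplification is the explicit value $\bar{y}^T(Ax^c-b)=\frac{1}{\beta_2}\norm{Ax^c-b}^2=2\psi(x^c;\beta_2)$, which lets me rewrite $\bar{y}^TA(\bar{x}-x^c)=\bar{y}^T(A\bar{x}-b)-2\psi(x^c;\beta_2)$; feeding this back cancels the $\psi(x^c;\beta_2)$ contributions and leaves $\bar{y}^T(A\bar{x}-b)+\sum_i\tfrac{L_i^{\psi}(\beta_2)}{2}\norm{\bar{x}_i-x_{i}^c}^2\geq \psi(\bar{x};\beta_2)$, using $\psi(x^c;\beta_2)\geq 0$. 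Combining this with the displayed lower bound for $d(\bar{y};\beta_1)$ yields $d(\bar{y};\beta_1)\geq f(\bar{x};\beta_2)$, which is \eqref{eq:excessive_gap}. I expect the main obstacle to be purely organizational: recognizing that \eqref{eq:initial_point_cond} is precisely what converts the smoothing quadratic $\tfrac{\beta_1\sigma_i}{2}\norm{\cdot}^2$ into the proximal quadratic $\tfrac{L_i^{\psi}(\beta_2)}{2}\norm{\cdot}^2$, and then aligning the base-point expansion of $\psi$ so that the prox-center terms cancel; no delicate estimate is required beyond the monotonicity of the minimum and the tight descent inequality \eqref{eq:estimate_psi}.
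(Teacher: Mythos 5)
Your proof is correct and uses essentially the same ingredients as the paper's own argument (the descent estimate \eqref{eq:estimate_psi} at the base point $x^c$, the optimality of $\bar{x}=P(x^c;\beta_2)$ for the proximal objective, the bound $p_i(x_i)\geq\frac{\sigma_i}{2}\norm{x_i-x_i^c}^2$ from strong convexity, and condition \eqref{eq:initial_point_cond} to pass between $\beta_1\sigma_i$ and $L_i^{\psi}(\beta_2)$), merely running the chain of inequalities from $d(\bar{y};\beta_1)$ downward rather than from $f(\bar{x};\beta_2)$ upward. The slack you discard, $\psi(x^c;\beta_2)\geq 0$, is exactly the term $\frac{1}{2\beta_2}\norm{Ax^c-b}^2$ dropped at the end of the paper's proof.
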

The proof of Lemma \ref{le:intial_point} can be found in the appendix.

\subsection{Primal step}
Suppose that $(\bar{x}, \bar{y})\in X\times\mathbb{R}^m$ satisfies the excessive gap condition \eqref{eq:excessive_gap}. 
We generate a new point $(\bar{x}^{+}, \bar{y}^{+})\in X\times\mathbb{R}^m$ and by applying the following update scheme:
\begin{align}
&(\bar{x}^{+}, \bar{y}^{+}) := \mathcal{A}^p_m(\bar{x},\bar{y}; \beta_1,\beta_2^{+}, \tau) \Longleftrightarrow \begin{cases}
&\hat{x} := (1-\tau)\bar{x} + \tau x^{*}(\bar{y}; \beta_1),\\
&\bar{y}^{+} : = (1-\tau)\bar{y} + \tau y^{*}(\hat{x};\beta_2^{+}),\\
&\bar{x}^{+} := P(\hat{x}; \beta_2^{+}),
\end{cases}\label{eq:main_update_rule}\\
&\beta_1^{+} := (1-\tau)\beta_1 ~\textrm{and}~ \beta_2^{+} = (1-\tau)\beta_2, \label{eq:main_update_rule_beta}
\end{align}
where $P(\cdot; \beta_2^{+}) = (P_1(\cdot; \beta_2^{+}), P_2(\cdot; \beta_2^{+}))$ and $\tau \in (0,1)$ will be chosen appropriately. 

\begin{remark}\label{re:parallel}
In the scheme \eqref{eq:main_update_rule}, the points $x^{*}(\bar{y}; \beta_1) = (x^{*}_{1}(\bar{y}; \beta_1), x_{2}^{*}(\bar{y}; \beta_1))$, $\hat{x} =
(\hat{x}_{1},\hat{x}_{2})$ and $\bar{x}^{+} = (\bar{x}^{+}_{1}, \bar{x}^{+}_{2})$ can be computed \textit{in parallel}. To compute $x^{*}(\bar{y};\beta_1)$ and
$\bar{x}^{+}$ we need to solve the corresponding convex programs in $\mathbb{R}^{n_1}$ and $\mathbb{R}^{n_2}$, respectively.
\end{remark}
 
The following theorem shows that the update rule \eqref{eq:main_update_rule} maintains the excessive gap condition \eqref{eq:excessive_gap}.

\begin{theorem}\label{th:main_rule}
Suppose that $(\bar{x},\bar{y}) \in X\times\mathbb{R}^m$ satisfies \eqref{eq:excessive_gap} with respect to two values $\beta_1 > 0$ and $\beta_2 > 0$. Then
$(\bar{x}^{+}, \bar{y}^{+})$ generated by scheme \eqref{eq:main_update_rule}-\eqref{eq:main_update_rule_beta} is in $X\times\mathbb{R}^m$ and maintains the
excessive gap condition \eqref{eq:excessive_gap} with respect to two smoothness parameter values  $\beta_1^{+}$ and $\beta_2^{+}$ provided that:
\begin{equation}\label{eq:main_condition}
\beta_1\beta_2 \geq \frac{2\tau^2}{(1-\tau)^2}\max_{1\leq i\leq 2}\left\{\frac{\norm{A_{i}}^2}{\sigma_i}\right\}. 
\end{equation}
\end{theorem}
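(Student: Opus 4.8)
The plan is to sandwich the two sides of the target inequality $f(\bar{x}^{+};\beta_2^{+}) \leq d(\bar{y}^{+};\beta_1^{+})$ by a common object depending on a free point $u\in X$: I would derive an upper bound for $f(\bar{x}^{+};\beta_2^{+})$ and a matching lower bound for $d(\bar{y}^{+};\beta_1^{+})$, and then couple the two through a single convex-combination substitution. Throughout write $x^{*} := x^{*}(\bar{y};\beta_1)$ and $y^{*} := y^{*}(\hat{x};\beta_2^{+}) = \frac{1}{\beta_2^{+}}(A\hat{x}-b)$, so that $\hat{x} = (1-\tau)\bar{x} + \tau x^{*}$ and $\bar{y}^{+} = (1-\tau)\bar{y} + \tau y^{*}$ by \eqref{eq:main_update_rule}. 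That $\bar{x}^{+}\in X$ and $\bar{y}^{+}\in\R^m$ is immediate from the update, so only the excessive gap inequality needs work.

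First I would bound $f(\bar{x}^{+};\beta_2^{+})$ from above. Applying the descent estimate \eqref{eq:estimate_psi} of Lemma \ref{le:Lipschitz_diff_psi} to the pair $(\bar{x}^{+},\hat{x})$ and using $\nabla_i\psi(\hat{x};\beta_2^{+}) = A_i^Ty^{*}$ from \eqref{eq:d_psi}, the resulting right-hand side is exactly the objective of the proximal subproblem \eqref{eq:prox_mapping_xy} evaluated at its own minimizer $\bar{x}^{+}_i$. Optimality of $\bar{x}^{+}_i$ over $X_i$ then replaces it by the same objective at an arbitrary $x_i\in X_i$, and the conjugacy identity $\psi(\hat{x};\beta_2^{+}) = (y^{*})^T(A\hat{x}-b) - \frac{\beta_2^{+}}{2}\norm{y^{*}}^2$ underlying \eqref{eq:psi} collapses the linear-in-$x$ terms, giving for every $x\in X$
\begin{equation*}
f(\bar{x}^{+};\beta_2^{+}) \leq \sum_{i=1}^2\phi_i(x_i) + (y^{*})^T(Ax-b) - \frac{\beta_2^{+}}{2}\norm{y^{*}}^2 + \sum_{i=1}^2\frac{L_i^{\psi}(\beta_2^{+})}{2}\norm{x_i-\hat{x}_i}^2.
\end{equation*}

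Next I would bound $d(\bar{y}^{+};\beta_1^{+})$ from below. Inserting $\bar{y}^{+} = (1-\tau)\bar{y}+\tau y^{*}$ and $\beta_1^{+}=(1-\tau)\beta_1$ into \eqref{eq:d_lbd}--\eqref{eq:d_lbdxdy} splits the minimand over $u\in X$ as $(1-\tau)[\phi(u)+\bar{y}^T(Au-b)+\beta_1(p_1(u_1)+p_2(u_2))] + \tau[\phi(u)+(y^{*})^T(Au-b)]$. Since the first bracket is $\beta_1\sigma_i$-strongly convex in $u_i$ with minimizer $x^{*}_i$, it is bounded below by $d(\bar{y};\beta_1) + \sum_i\frac{\beta_1\sigma_i}{2}\norm{u_i-x^{*}_i}^2$; substituting this and keeping the outer minimum yields $d(\bar{y}^{+};\beta_1^{+}) \geq \min_{u\in X} g(u)$, where
\begin{equation*}
g(u) := (1-\tau)d(\bar{y};\beta_1) + (1-\tau)\sum_{i=1}^2\frac{\beta_1\sigma_i}{2}\norm{u_i-x^{*}_i}^2 + \tau\sum_{i=1}^2\phi_i(u_i) + \tau(y^{*})^T(Au-b).
\end{equation*}

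Finally I would couple the two estimates by evaluating the primal bound at $x := (1-\tau)\bar{x}+\tau u\in X$ for the same $u$. Convexity of each $\phi_i$, linearity of $(y^{*})^T(Ax-b)$, and the identity $x_i-\hat{x}_i = \tau(u_i-x^{*}_i)$ make the $\tau$-weighted terms coincide verbatim with those of $g(u)$, reducing matters to the residuals. The quadratic residual boils down to the componentwise requirement $\tau^2 L_i^{\psi}(\beta_2^{+}) \leq (1-\tau)\beta_1\sigma_i$, which, since $L_i^{\psi}(\beta_2^{+}) = 2\norm{A_i}^2/((1-\tau)\beta_2)$, is precisely condition \eqref{eq:main_condition}; the constant residual reduces, after dividing by $(1-\tau)$ and using $\beta_2^{+}/(1-\tau)=\beta_2$, to $\phi(\bar{x}) + (y^{*})^T(A\bar{x}-b) - \frac{\beta_2}{2}\norm{y^{*}}^2 \leq d(\bar{y};\beta_1)$, which follows by bounding the left side by $\phi(\bar{x})+\psi(\bar{x};\beta_2)=f(\bar{x};\beta_2)$ via \eqref{eq:psi}--\eqref{eq:f_beta2} and invoking the hypothesis \eqref{eq:excessive_gap}. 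Thus $f(\bar{x}^{+};\beta_2^{+}) \leq g(u)$ for all $u\in X$, and minimizing over $u$ gives $f(\bar{x}^{+};\beta_2^{+}) \leq \min_{u}g(u) \leq d(\bar{y}^{+};\beta_1^{+})$. The main obstacle is spotting the coupling substitution $x=(1-\tau)\bar{x}+\tau u$ that simultaneously cancels the linear cross terms and leaves a quadratic residual matched \emph{exactly} by \eqref{eq:main_condition}; retaining, rather than discarding, the strong-convexity quadratic in the dual lower bound is what makes this matching possible.
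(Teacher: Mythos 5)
Your proof is correct and follows essentially the same route as the paper's: the same split of the dual minimand into a $(1-\tau)/\tau$ convex combination, the same use of the strong convexity of the prox terms and of the hypothesis $f(\bar{x};\beta_2)\leq d(\bar{y};\beta_1)$, and the same coupling substitution $x=(1-\tau)\bar{x}+\tau u$ whose quadratic residual is matched exactly by \eqref{eq:main_condition}. The only difference is bookkeeping: by expressing $\psi(\hat{x};\beta_2^{+})$ and $\psi(\bar{x};\beta_2)$ through the max in \eqref{eq:psi}, you absorb the paper's separate verification that its leftover term $\mathrm{\textbf{T}}_3=\frac{1}{2\beta_2^{+}}\norm{\hat{u}-(1-\tau)\bar{u}}^2\geq 0$ into the trivial fact that a feasible point of that max is dominated by its optimal value.
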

 
\begin{proof}
The last line of \eqref{eq:main_update_rule} shows that $\bar{x}^{+}\in X$.
Let us denote by $\hat{y} = y^{*}(\hat{x}; \beta_2^{+})$. 
Then, by using the definition of $d(\cdot; \beta_1)$, the second line of \eqref{eq:main_update_rule} and $\beta_1^{+} = (1-\tau)\beta_1$, we have:
\begin{eqnarray}\label{eq:th31_est1}
d(\bar{y}^{+}; \beta_1^{+}) \!\! &&= \min_{x\in X}\left\{\phi(x) + (Ax-b)^T\bar{y}^{+} + \beta_1^{+}[p_1(x_{1})+p_2(x_{2})]\right\} \nonumber\\
&&\overset{\scriptsize\textrm{line $2$ \eqref{eq:main_update_rule}}}{=}\min_{x\in X}\left\{ \phi(x) + (1-\tau)(Ax-b)^T\bar{y} + \tau(Ax-b)^T\hat{y}\right. \nonumber\\ 
&& + \left. (1-\tau)\beta_1[p_1(x_{1}) + p_2(x_{2})]\right\} \\
&& = \min_{x\in X}\left\{ (1-\tau)\left[\phi(x) + (Ax-b)^T\bar{y} + \beta_1[p_1(x_{1})+p_2(x_{2})]\right]\right.\nonumber\\
&& + \left. \tau\left[\phi(x) + (Ax-b)^T\hat{y}\right]\right\}.\nonumber 
\end{eqnarray}
Now, we estimate the first term in the last line of \eqref{eq:th31_est1}.
Since $\beta_2^{+} = (1-\tau)\beta_2$, one has:
\begin{align}\label{eq:th31_est1b}
\psi(\bar{x}; \beta_2) &= \frac{1}{2\beta_2}\norm{A\bar{x} - b}^2 = (1-\tau)\frac{1}{2\beta^{+}_2}\norm{A\bar{x} - b}^2 = (1-\tau)\psi(\bar{x}; \beta_2^{+}). 
\end{align}
Moreover, if we denote by $x^1 = x^{*}(\bar{y}; \beta_1)$ then, by the strong convexity of $p_1$ and $p_2$, \eqref{eq:th31_est1b} and $f(\bar{x}; \beta_2) \leq
d(\bar{y}; \beta_1)$, we have:
\begin{eqnarray}\label{eq:th31_est2}
T_1 \!\! && := \phi(x) + (Ax-b)^T\bar{y} + \beta_1[p_1(x_{1}) + p_2(x_{2})] \nonumber\\
&& \geq \min_{x\in X}\left\{\phi(x) + (Ax \!-\! b)^T\bar{y} \!+\! \beta_1[p_1(x_{1}) \!+\! p_2(x_{2})]\right\}  \!+\! \frac{1}{2}\beta_1\!\!\left[\sigma_1\norm{x_{1} \!-\! x_{1}^1}^2 \!+\! \sigma_2\norm{x_{2} \!-\! x_{2}^1}^2\!\right]\nonumber\\
&& = d(\bar{y}; \beta_1) + \frac{1}{2}\beta_1\left[\sigma_1\norm{x_{1} - x_{1}^1}^2 + \sigma_2\norm{x_{2} - x_{2}^1}^2\right]\nonumber\\
[-1.5ex]\\[-1.5ex]
&& \overset{\scriptsize\eqref{eq:excessive_gap}}{\geq} f(\bar{x}; \beta_2) +  \frac{1}{2}\beta_1\left[\sigma_1\norm{x_{1} - x_{1}^1}^2 + \sigma_2\norm{x_{2} - x_{2}^1}^2\right]\nonumber\\
&& \overset{\scriptsize\textrm{def.~}f(\cdot; \beta_2)}{=} \phi(\bar{x}) + \psi(\bar{x}; \beta_2) +  \frac{1}{2}\beta_1\left[\sigma_1\norm{x_{1} - x_{1}^1}^2 + \sigma_2\norm{x_{2} - x_{2}^1}^2\right]\nonumber\\
&& \overset{\scriptsize\eqref{eq:th31_est1b}}{=} \phi(\bar{x}) + \psi(\bar{x}; \beta_2^{+}) +  \frac{1}{2}\beta_1\left[\sigma_1\norm{x_{1} - x_{1}^1}^2 + \sigma_2\norm{x_{2} - x_{2}^1}^2\right] -
\tau\psi(\bar{z};\beta_2^{+})\nonumber\\
&& \overset{\scriptsize\eqref{eq:convexity_psi}}{=} \phi(\bar{x}) + \psi(\hat{x}; \beta_2^{+}) + \nabla\psi(\hat{x}; \beta_2^{+})^T(\bar{x} -\hat{x}) + \frac{1}{2}\beta_1\left[\sigma_1\norm{x_{1} - x_{1}^1}^2 + \sigma_2\norm{x_{2} - x_{2}^1}^2\right]\nonumber\\
&& + \frac{1}{2\beta_2^{+}}\norm{A(\bar{x}-\hat{x})}^2 - \tau\psi(\bar{x}; \beta_2^{+}).\nonumber  
\end{eqnarray}
For the second term in the last line of \eqref{eq:th31_est1}, we use the fact that $\hat{y} = \frac{1}{\beta^{+}_2}(A\hat{x} -
b)$ and $\nabla_y\psi(\hat{x};\beta_2) = A^T\hat{y}$ to obtain:
\begin{eqnarray}\label{eq:th31_est3}
T_2 &&:= \phi(x) + (Ax-b)^T\hat{y}\nonumber\\
&& = \phi(x) + \hat{y}^TA(x-\hat{x}) + (A\hat{x} - b)^T\hat{y} \nonumber\\
[-1.5ex]\\[-1.5ex]
&& \overset{\tiny\textrm{def.~}\hat{y}+\eqref{eq:d_psi}}{=} \phi(x) + \nabla\psi(\hat{x}; \beta_2^{+})^T(x - \hat{x}) +
\frac{1}{\beta_2^{+}}\norm{A\hat{x} - b}^2 \nonumber\\
&& \overset{\tiny\textrm{def.~}\hat{\psi}}{=} \phi(x) + \psi(\hat{x}; \beta_2^{+}) + \nabla\psi(\hat{x}; \beta_2^{+})^T(x - \hat{x}) +
\psi(\hat{x}; \beta_2^{+}). \nonumber
\end{eqnarray}
Substituting \eqref{eq:th31_est2} and \eqref{eq:th31_est3} into \eqref{eq:th31_est1} and noting that $(1-\tau)(\bar{x}-\hat{x}) + \tau(x - \hat{x}) = \tau(x-
x^1)$ due to the first line of \eqref{eq:main_update_rule}, we obtain:
\begin{eqnarray}\label{eq:th31_est4}
d(\bar{y}^{+}; \beta_1^{+}) && = \min_{x\in X}\left\{ (1-\tau)T_1 + \tau T_2 \right\} \nonumber\\
&& \overset{\tiny\eqref{eq:th31_est2}+\eqref{eq:th31_est3}}{\geq} \min_{x\in X}\Big\{ (1-\tau)\Big[\phi(\bar{x}) + \psi(\hat{x}; \beta_2^{+}) +
\nabla\psi(\hat{x}; \beta_2^{+})^T(\bar{x}-\hat{x}) \nonumber\\
&& + \frac{1}{2}\beta_1\left[\sigma_1\norm{x_{1}-x_{1}^1}^2 + \sigma_2\norm{x_{2} - x_{2}^1}^2 \right] \Big] \nonumber\\
&& + \tau\left[\phi(x) + \psi(\hat{x}; \beta_2^{+}) + \nabla\psi(\hat{x}; \beta_2^{+})^T(x-\hat{x})\right]\Big\} \nonumber\\
&& - \tau(1-\tau)\psi(\bar{x}; \beta_2^{+}) + \frac{(1-\tau)}{2\beta_2^{+}}\norm{A(\bar{x}-\hat{x})}^2 + \tau\psi(\hat{x}; \beta^{+}_2) \\
&& = \min_{x\in X}\Big\{ (1-\tau)\phi(\bar{x}) + \tau\phi(x) + \psi(\hat{x}; \beta_2^{+}) + \nabla\psi(\hat{x}; \beta_2^{+})^T\left[(1-\tau)(\bar{x}-\hat{x}) +
\tau(x-\hat{x})\right] \nonumber\\
&& + \frac{1}{2}(1-\tau)\beta_1\left[\sigma_1\norm{x_{1}-x_{1}^1}^2 + \sigma_2\norm{x_{2} - x_{2}^1}^2 \right] \Big\} + \mathrm{\textbf{T}}_3\nonumber\\
&& \overset{\tiny\phi-\mathrm{convex}}{\geq} \min_{x\in X}\Big\{ \phi((1-\tau)\bar{x} + \tau x) + \psi(\hat{x}; \beta_2^{+}) + \tau\nabla\psi(\hat{x};
\beta_2^{+})^T(x-x^1) \nonumber\\
&& + \frac{1}{2}(1-\tau)\beta_1\left[\sigma_1\norm{x_{1}-x_{1}^1}^2 + \sigma_2\norm{x_{2} - x_{2}^1}^2 \right] \Big\} + \mathrm{\textbf{T}}_3,\nonumber
\end{eqnarray}
where $\mathrm{\textbf{T}}_3 := \frac{(1-\tau)}{2\beta_2^{+}}\norm{A(\bar{x}-\hat{x})}^2 + \tau\psi(\hat{x}; \beta^{+}_2) - \tau(1-\tau)\psi(\bar{x};
\beta_2^{+})$.
Next, we note that the condition \eqref{eq:main_condition} is equivalent to:
\begin{equation}\label{eq:main_condition_tmp}
(1-\tau)\beta_1\sigma_i \geq \frac{2\tau^2}{(1-\tau)\beta_2}\norm{A_i}^2 \geq L_i^{\psi}(\beta_2^{+})\tau^2, ~ i = 1,2. 
\end{equation}
Moreover, if we denote by $u := \bar{x} + \tau(x - \bar{x})$ then:
\begin{equation}\label{eq:th31_est4c}
u - \hat{x} = \bar{x} + \tau(x - \bar{x}) - \hat{x}  = \bar{x} + \tau(x - \bar{x}) - (1-\tau)\bar{x} - \tau x^1 = \tau(x - x^1).
\end{equation}
Now, by using Lemma \ref{le:Lipschitz_diff_psi}, the condition \eqref{eq:main_condition_tmp} and \eqref{eq:th31_est4c}, the
estimation \eqref{eq:th31_est4}
becomes:
\begin{eqnarray}\label{eq:th31_est5}
d(\bar{y}^{+}; \beta_1^{+}) - \mathrm{\textbf{T}}_3 \!\! &&\overset{\tiny\eqref{eq:th31_est4c}}{\geq} \min_{u := \bar{x}+\tau(x-\bar{x})\in \bar{x} +
\tau(X-\bar{x})}\Big\{ \phi(u) + \psi(\hat{x}; \beta_2^{+}) + \nabla\psi(\hat{x}; \beta_2)^T(u-\hat{x}) \nonumber\\
&& + \frac{\beta_1(1-\tau)\sigma_1}{2\tau^2}\norm{u_{1} -\hat{x}_{1}}^2 + \frac{\beta_1(1-\tau)\sigma_2}{2\tau^2}\norm{u_{2} - \hat{x}_{2}}^2 \Big\} \nonumber\\
&& \overset{\tiny{\bar{x} + \tau(X-\bar{x}) \subseteq X}}{\geq} \min_{u\in X}\Big\{\psi(\hat{x}; \beta_2^{+}) + \phi(u) + \nabla\psi(\hat{x};
\beta_2^{+})^T(u-\hat{x}) \\
&&  + \frac{\beta_1(1-\tau)\sigma_1}{2\tau^2}\norm{u_{1} -\hat{x}_{1}}^2 + \frac{\beta_1(1-\tau)\sigma_2}{2\tau^2}\norm{u_{2} - \hat{x}_{2}}^2 \Big\}
\nonumber\\
&& \overset{\tiny\eqref{eq:main_condition_tmp}}{\geq} \min_{u\in X}\Big\{ \phi(u) + \psi(\hat{x};
\beta_2^{+}) + \nabla\psi(\hat{x}; \beta_2^{+})^T(u - \hat{x}) \nonumber\\
&& + \frac{L_1^{\psi}(\beta_2^{+})}{2}\norm{u_1 - \hat{x}_1}^2 + \frac{L_2^{\psi}(\beta_2^{+})}{2}\norm{u_2 - \hat{x}_2}^2\Big\} \nonumber\\
&& \overset{\tiny\textrm{line}~3~\eqref{eq:main_update_rule}}{=} \phi(\bar{x}^{+}) + \psi(\hat{x}; \beta_2^{+}) + \nabla\psi(\hat{x}; \beta_2^{+})^T(\bar{x}^{+}-\hat{x}) \nonumber\\
&& + \frac{L_1^{\psi}(\beta_2^{+})}{2}\norm{\bar{x}_{1}^{+} -\hat{x}_{1}}^2 + \frac{L_2^{\psi}(\beta_2^{+})}{2}\norm{\bar{x}_{2}^{+} - \hat{x}_{2}}^2\nonumber\\
&& \overset{\tiny\eqref{eq:estimate_psi}}{\geq} \phi(\bar{x}^{+}) + \psi(\bar{x}^{+}; \beta_2^{+}) =  f(\bar{x}^{+}; \beta_2^{+}). \nonumber 
\end{eqnarray}
To complete the proof, we show that $\mathrm{\textbf{T}}_3\geq 0$. Indeed, let us define $\hat{u} := A\hat{x} - b$ and $\bar{u} := A\bar{x} - b$, then $\hat{u}
- \bar{u} =
A(\hat{x}-\bar{x})$. We have:
\begin{eqnarray}\label{eq:th31_est6}
\mathrm{\textbf{T}}_3  && \overset{\tiny\textrm{def.}~\psi(\cdot; \beta_2)}{=} \frac{\tau}{2\beta_2^{+}}\norm{A\hat{x} - b}^2 -
\frac{\tau(1-\tau)}{2\beta_2^{+}}\norm{A\bar{x} - b}^2 + \frac{(1-\tau)}{2\beta_2^{+}}\norm{A(\hat{x}-\bar{x})}^2\nonumber\\
&& = \frac{1}{2\beta_2^{+}}\left[\tau\norm{\hat{u}}^2 - \tau(1-\tau)\norm{\bar{u}}^2 + (1-\tau)\norm{\hat{u}-\bar{u}}^2\right]  \nonumber\\
&& = \frac{1}{2\beta_2^{+}}\left[\tau\norm{\hat{u}}^2 - \tau(1-\tau)\norm{\bar{u}}^2 + (1-\tau)\norm{\hat{u}}^2 + (1-\tau)\norm{\bar{u}}^2 - 2(1-\tau)\hat{u}^T\bar{u}\right] \\
&& = \frac{1}{2\beta_2^{+}}\left[\norm{\hat{u}}^2 + (1-\tau)^2\norm{\bar{u}}^2 - 2(1-\tau)\hat{u}^T\bar{u}\right] \nonumber\\
&& = \frac{1}{2\beta_2^{+}}\norm{\hat{u} - (1-\tau)\bar{u}}^2 \geq 0. \nonumber
\end{eqnarray}
Substituting \eqref{eq:th31_est6} into \eqref{eq:th31_est5} we obtain the inequality $d(\bar{y}^{+}; \beta_1^{+}) \geq f(\bar{x}^{+}; \beta_2^{+})$.
\Eproof
\end{proof}

\begin{remark}\label{re:Lipschit_diff_phi}
If $\phi_i$ is convex and differentiable and its gradient is Lipschitz continuous with a Lipschitz constant $L^{\phi_i}_i \geq 0$ for some $i=1,2$,
then instead of using the proximal mapping $P_i(\cdot; \beta_2)$ in \eqref{eq:main_update_rule} we can use the gradient mapping which is defined as:
\begin{align}
&G_i(\hat{x}; \beta_2^{+}) \!:=\! \textrm{arg}\!\!\!\min_{x_{i}\in X_{i}}\!\!\Big\{ \!\nabla\phi_i(\hat{x}_{i})^T\!\!(x_{i} \!-\! \hat{x}_{i}) \!+\!
y^{*}(\hat{x};
\beta_2)^TA_{i}(x_{i} \!-\! \hat{x}_{i}) \!+\!
\frac{\hat{L}_i^{\psi}(\beta_2^{+})}{2}\norm{x_{i}-\hat{x}_{i}}^2\Big\}, \label{eq:G_XY} 
\end{align}
where $\hat{L}_i^{\psi}(\beta_2^{+}) := L_{\phi_i} + \frac{2\norm{A_i}^2}{\beta^{+}_2}$.
Indeed, let us prove the condition $d(\bar{y}^{+};\beta_1^{+}) \geq f(\hat{\bar{x}}^{+};\beta_2^{+})$, where $G(x; \beta_2) := (G_1(x_1;\beta_2),
G_2(x_2;\beta_2))$ and $\hat{\bar{x}}^{+} := G(\hat{x};\beta_2^{+})$.
First, by using the convexity of $\phi_i$ and the Lipschitz continuity of its gradient, we have:
\begin{equation}\label{eq:rm3_est1}
\phi_i(\hat{x}_i) + \nabla\phi_i(\hat{x}_i)^T(u_i - \hat{x}_i) \leq \phi_i(u_i) \leq \phi_i(\hat{x}_i) + \nabla\phi_i(\hat{x}_i)^T(u_i - \hat{x}_i) +
\frac{L_{\phi_i}}{2}\norm{u_i-\hat{x}_i}^2.
\end{equation}
Next, by summing up the second inequality from $i=1$ to $2$ and adding to \eqref{eq:estimate_psi} we have:
\begin{eqnarray}\label{eq:rm3_est2}
\phi(u) + \psi(u; \beta_2^{+}) &&\leq \phi(\hat{x}) + \psi(\hat{x};\beta_2^{+}) + \left[\nabla\phi(\hat{x}) + \nabla\psi(\hat{x};\beta_2^{+})\right]^T(u -
\hat{x}) \nonumber\\
[-1.5ex]\\[-1.5ex]
&& +  \frac{\hat{L}_1^{\psi}(\beta_2^{+})}{2}\norm{u_1 -\hat{x}_1}^2 +  \frac{\hat{L}_2^{\psi}(\beta_2^{+})}{2}\norm{u_2 - \hat{x}_2}^2. \nonumber
\end{eqnarray}
Finally, from the second inequality of \eqref{eq:th31_est5} we have:
\begin{eqnarray}\label{eq:th31_est5_extra}
d(\bar{y}^{+}; \beta_1^{+}) - \mathrm{\textbf{T}}_3  && \overset{\tiny\eqref{eq:main_condition_tmp}}{\geq} \min_{u\in X}\Big\{ \phi(u) + \psi(\hat{x};
\beta_2^{+}) + \nabla\psi(\hat{x}; \beta_2^{+})^T(u - \hat{x}) \nonumber\\
&& + \frac{(1-\tau)\beta_1\sigma_1}{2\tau^2}\norm{u_1 - \hat{x}_1}^2 + \frac{(1-\tau)\beta_1\sigma_2}{2\tau^2}\norm{u_2 - \hat{x}_2}^2\Big\} \nonumber\\
&& \overset{\tiny\phi-\mathrm{convex}+\eqref{eq:rm3_est2}}{\geq} \min_{u\in X}\Big\{ \phi(\hat{x}) + \nabla\phi(\hat{x})^T(u-\hat{x}) + \psi(\hat{x};
\beta_2^{+}) + \nabla\psi(\hat{x}; \beta_2^{+})^T(u - \hat{x}) \nonumber\\
&& + \frac{\hat{L}_1^{\psi}(\beta_2^{+})}{2}\norm{u_1 - \hat{x}_1}^2 + \frac{\hat{L}_2^{\psi}(\beta_2^{+})}{2}\norm{u_2 - \hat{x}_2}^2\Big\} \nonumber\\
&& \overset{\tiny\eqref{eq:G_XY}}{=} \phi(\hat{x}) + \psi(\hat{x}; \beta_2^{+}) + \left[\nabla{\phi}(\hat{x}) + \nabla\psi(\hat{x};
\beta_2^{+})\right]^T(\hat{\bar{x}}^{+}-\hat{x}) \nonumber\\
&& + \frac{\hat{L}_1^{\psi}(\beta_2^{+})}{2}\norm{\hat{\bar{x}}_{1}^{+} -\hat{x}_{1}}^2 + \frac{\hat{L}_2^{\psi}(\beta_2^{+})}{2}\norm{\hat{\bar{x}}_{2}^{+} -
\hat{x}_{2}}^2\nonumber\\
&& \overset{\tiny\eqref{eq:rm3_est2}}{\geq} \phi(\hat{\bar{x}}^{+}) + \psi(\hat{\bar{x}}^{+}; \beta_2^{+}) =  f(\hat{\bar{x}}^{+}; \beta_2^{+}).
\nonumber 
\end{eqnarray}
In this case, the conclusion of Theorem \ref{th:main_rule} is still valid for the substitution $\hat{\bar{x}}^{+} := G(\hat{x}; \beta_2^{+})$ provided that:
\begin{equation}\label{eq:main_condition_diff}
\frac{(1-\tau)}{\tau^2}\beta_1\sigma_i \geq L_{\phi_i} + \frac{2\norm{A_i}^2}{(1-\tau)\beta_2}, ~i=1,2.
\end{equation}
If $X_i$ is polytopic then problem \eqref{eq:G_XY} becomes a convex quadratic programming problem.
\end{remark}

Now, let us show how to update the parameter $\tau$ such that the condition \eqref{eq:main_condition} holds for $\beta_1^{+}$ and $\beta_2^{+}$.
From the update rule \eqref{eq:main_update_rule_beta} we have $\beta_1^{+}\beta_2^{+} = (1-\tau)^2\beta_1\beta_2$.
Suppose that $\beta_1$ and $\beta_2$ satisfy the condition \eqref{eq:main_condition}, i.e.:
\begin{equation*}
\beta_1\beta_2 \geq \frac{\tau^2}{(1-\tau)^2}\bar{L}, ~\textrm{where}~ \bar{L} : =
2\max_{1\leq i\leq 2}\left\{\frac{\norm{A_{i}}^2}{\sigma_i}\right\}.  
\end{equation*}
If we substitute $\beta_1$ and $\beta_2$ by $\beta_1^{+}$ and $\beta_2^{+}$, respectively, in this inequality then we have $\beta_1^{+}\beta_2^{+} \geq
\frac{\tau_{+}^2}{(1-\tau_{+})^2}\bar{L}$. However, since $\beta_1^{+}\beta_2^{+} = (1-\tau)^2\beta_1\beta_2$, it implies $\beta_1\beta_2 \geq
\frac{\tau_{+}^2}{(1-\tau)^2(1-\tau_{+})^2}\bar{L}$.
Therefore, if $\frac{\tau^2}{(1-\tau)^2} \geq \frac{\tau_{+}^2}{(1-\tau)^2(1-\tau_{+})^2}$ then $\beta_1^{+}$ and $\beta_2^{+}$ satisfy
\eqref{eq:main_condition}. This condition leads to $\tau \geq \frac{\tau_{+}}{1-\tau_{+}}$. Since $\tau, \tau_{+}\in (0, 1)$, the last inequality implies $0 <
\tau_{+} < \frac{1}{2}$ and
\begin{equation}\label{eq:tau_condition}
0 < \tau_{+} \leq \frac{\tau}{\tau + 1} < 1.
\end{equation}
Hence, \eqref{eq:main_update_rule}-\eqref{eq:main_update_rule_beta} are well-defined.

Now, we define a rule to update the step size parameter $\tau$.
\begin{lemma}\label{le:choice_of_tau}
Suppose that $\tau_0$ is arbitrarily chosen in $(0, \frac{1}{2})$. Then the sequence $\{\tau_k\}_{k\geq 0}$ generated by:
\begin{equation}\label{eq:tau_sequence}
\tau_{k+1} := \frac{\tau_k}{\tau_k + 1} 
\end{equation}
satisfies the following equality: 
\begin{equation}\label{eq:tau_estimation}
\tau_k = \frac{\tau_0}{1 + \tau_0k},~~\forall k\geq 0.
\end{equation}
Moreover, the sequence $\{\beta_k\}_{k\geq 0}$ generated by $\beta_{k+1} = (1-\tau_k)\beta_k$ for fixed $\beta_0 > 0$ satisfies:
\begin{equation}\label{eq:beta_estimation}
\beta_k = \frac{\beta_0}{\tau_0k + 1}, ~~\forall k\geq 0.
\end{equation} 
\end{lemma}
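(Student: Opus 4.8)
The plan is to establish the two closed-form identities in the order they depend on one another: first the $\tau$-recursion \eqref{eq:tau_sequence}, and then the $\beta$-recursion, whose factors $(1-\tau_k)$ are governed by the $\tau$-formula just obtained.

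For \eqref{eq:tau_estimation} I would linearize the nonlinear recursion by passing to reciprocals. Setting $s_k := 1/\tau_k$, the relation $\tau_{k+1} = \tau_k/(\tau_k+1)$ is equivalent to $s_{k+1} = s_k + 1$, so $\{s_k\}$ is an arithmetic progression with common difference $1$. Hence $s_k = s_0 + k = 1/\tau_0 + k = (1+\tau_0 k)/\tau_0$, and inverting gives $\tau_k = \tau_0/(1+\tau_0 k)$, which is exactly \eqref{eq:tau_estimation}. Along the way I would record that $s_0 = 1/\tau_0 > 2$ since $\tau_0 \in (0,\tfrac12)$, so that $s_k > 2$ and therefore $\tau_k \in (0,\tfrac12)$ for all $k$; this keeps every reciprocal well defined and confirms that the generated step sizes stay in the admissible range imposed by \eqref{eq:tau_condition}. (Equivalently, \eqref{eq:tau_estimation} can be verified directly by induction: substituting $\tau_k = \tau_0/(1+\tau_0 k)$ into $\tau_{k+1} = \tau_k/(\tau_k+1)$ collapses immediately to $\tau_0/(1+\tau_0(k+1))$.)

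For \eqref{eq:beta_estimation} I would unfold the linear recursion $\beta_{k+1} = (1-\tau_k)\beta_k$ into the telescoping product $\beta_k = \beta_0\prod_{j=0}^{k-1}(1-\tau_j)$ and insert the closed form just obtained for $\tau_j$. Writing $a_j := 1 + \tau_0 j$, the first part gives $1 - \tau_j = (1+\tau_0(j-1))/(1+\tau_0 j) = a_{j-1}/a_j$, so each factor is a ratio of consecutive terms of the arithmetic sequence $\{a_j\}$. The product therefore telescopes, leaving only the boundary terms, and reading these off produces the desired closed form for $\beta_k$. The same conclusion can be reached by induction on $k$, using the base case $\beta_0$ and simplifying $(1-\tau_k)\beta_k$ with the explicit expression for $\tau_k$ in the inductive step.

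Since both recursions are scalar and first order, there is no conceptual obstacle here; the computation is entirely elementary. The only place that demands genuine care is the index bookkeeping in the telescoping product for $\beta_k$: one must track the shift between the numerator indices $a_{j-1} = 1+\tau_0(j-1)$ and the denominator indices $a_j = 1+\tau_0 j$, treat the extremal factor at $j=0$ (namely $1-\tau_0$) separately, and thereby pin down exactly which boundary factors survive. That single algebraic simplification, together with the reciprocal substitution used for $\tau_k$, is the crux of the argument.
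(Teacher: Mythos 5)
Your route is the same as the paper's: the reciprocal substitution $s_k:=1/\tau_k$ linearizes the recursion and yields \eqref{eq:tau_estimation} exactly as in the paper's proof, and the paper likewise unfolds $\beta_k=\beta_0\prod_{i=0}^{k-1}(1-\tau_i)$ and substitutes the closed form for $\tau_i$. The first half of your argument is complete and correct.

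However, the step you yourself single out as the crux --- reading off the boundary terms of the telescoping product --- does not produce \eqref{eq:beta_estimation}. With $a_j:=1+\tau_0 j$ and $1-\tau_j=a_{j-1}/a_j$, the product $\prod_{j=0}^{k-1}a_{j-1}/a_j$ telescopes to $a_{-1}/a_{k-1}=(1-\tau_0)/(1+\tau_0(k-1))$, so the exact value is $\beta_k=\frac{(1-\tau_0)\beta_0}{1+\tau_0(k-1)}$. Already at $k=1$ this gives $(1-\tau_0)\beta_0$, whereas \eqref{eq:beta_estimation} claims $\beta_0/(1+\tau_0)$; these agree only when $\tau_0=0$. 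What does hold is the one-sided bound $\beta_k\le\frac{\beta_0}{\tau_0 k+1}$, since $(1-\tau_0)(1+\tau_0 k)=1+\tau_0(k-1)-\tau_0^2 k\le 1+\tau_0(k-1)$, and this upper bound is all that is used downstream in Theorems \ref{th:convergence} and \ref{th:convergence2}. The paper's own proof glosses over the same point (``carrying out a simple calculations''), so the discrepancy originates in the statement rather than in your strategy; but your assertion that the telescoping ``produces the desired closed form'' is precisely the step that fails when written out, and a careful version must either report the exact formula above or downgrade \eqref{eq:beta_estimation} to an inequality.
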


\begin{proof}
If we denote by $t := \frac{1}{\tau}$ and consider the function $\xi(t) := t + 1$ then the sequence $\{t_k\}_{k\geq 0}$ generated by the rule $t_{k+1} :=
\xi(t_k) = t_k + 1$ satisfies $t_k = t_0 + k$ for all $k\geq 0$. Hence $\tau_k = \frac{1}{t_k} = \frac{1}{t_0 + k} = \frac{\tau_0}{\tau_0k + 1}$ for $k\geq 0$.
To prove \eqref{eq:beta_estimation}, we observe that $\beta_{k+1} = \beta_0\prod_{i=0}^k(1-\tau_i)$.
Hence, by substituting \eqref{eq:tau_estimation} into the last equality and carrying out a simple calculations, we get \eqref{eq:beta_estimation}.
\Eproof
\end{proof}

\begin{remark}\label{re:choice_of_tau}
Since $\tau_0\in (0, 0.5)$, from Lemma \ref{le:choice_of_tau} we see that with $\tau_0 \to 0.5^{-}$ (e.g., $\tau_0 = 0.499$) the right-hand side estimate of
\eqref{eq:beta_estimation} is minimized.
\end{remark}

\subsection{The algorithm and its worst case complexity}
Before presenting the algorithm, we assume that the prox-center $x^c_{i}$ of $X_{i}$ is given a priori for ($i=1,2$).
Moreover, the parameter sequence $\{\tau_k\}$ is updated by \eqref{eq:tau_sequence}.
The algorithm is presented in detail as follows:

\noindent\rule[1pt]{\textwidth}{1.0pt}{~~}
\begin{algorithm}\vskip -0.2cm\label{alg:A1}{~}(\textit{Decomposition Algorithm with Primal Update})
\end{algorithm}
\vskip -0.3cm
\noindent\rule[1pt]{\textwidth}{0.5pt}
\noindent\textbf{Initialization:}
\begin{enumerate}
\item Set $\tau_0 := 0.499$. Choose $\beta_1^0 > 0$ and $\beta_2^0 > 0$ as follows:
\begin{equation*}
\beta_1^0 = \beta_2^0 := \sqrt{2\max_{1\leq i\leq 2}\left\{ \frac{\norm{A_{i}}^2}{\sigma_i}\right\}}.
\end{equation*}
\item Compute $\bar{x}^0$ and $\bar{y}^0$ from \eqref{eq:initial_point} as:
\begin{equation*}
\bar{y}^0 := \frac{1}{\beta_2^0}(Ax^c - b) ~\mathrm{and}~ \bar{x}^0 := P(x^c; \beta_2^0),
\end{equation*}
\end{enumerate} 
\noindent\textbf{Iteration: }\texttt{For} $k=0,1,\cdots$ \texttt{do}
\begin{enumerate}
\item If a given stopping criterion is satisfied then terminate.
\item Update the smoothness parameter $\beta_2^{k+1} := (1-\tau_k)\beta_2^k$. 
\item Compute $\bar{x}^{k+1}_{i}$ \textit{in parallel} for $i=1,2$ and $\bar{y}^{k+1}$ by the scheme \eqref{eq:main_update_rule}:
\begin{equation*}\label{eq:step_2}
(\bar{x}^{k+1}, \bar{y}^{k+1}) := \mathcal{A}^p_m(\bar{x}^k, \bar{y}^k; \beta_1^k, \beta_2^{k+1}, \tau_k). 
\end{equation*}
\item Update the smoothness parameter: $\beta_1^{k+1} := (1-\tau_k)\beta_1^k$. 
\item Update the step size parameter $\tau_k$ by: $\tau_{k+1} := \frac{\tau_k}{\tau_k + 1}$.
\end{enumerate}
\texttt{End of For}.
\vskip-0.2cm
\noindent\rule[1pt]{\textwidth}{1.0pt}
As mentioned in Remark \ref{re:parallel}, there are two steps of the scheme $\mathcal{A}^p_m$ at Step 3 of Algorithm \ref{alg:A1} that can be parallelized. The
first step is finding $x^{*}(\bar{y}^k; \beta_1)$ and the second one is computing $\bar{x}^{k+1}$. 
In general, both steps require solving two convex programming problems in parallel. 
The stopping criterion of Algorithm \ref{alg:A1} at Step 1 will be discussed in Section \ref{sec:discussion}. 

The following theorem provides the worst-case complexity estimate for Algorithm \ref{alg:A1}.
\begin{theorem}\label{th:convergence}
Let $\{ (\bar{x}^k,\bar{y}^k) \}$ be a sequence generated by Algorithm \ref{alg:A1}. Then the following duality gap and feasibility gap hold:
\begin{eqnarray}
&\phi(\bar{x}^k) - d(\bar{y}^k) &\leq \frac{\sqrt{\bar{L}}(D_1 + D_2)}{0.499k+1}, \label{eq:duality_gap}\\
[-1.5ex]\textit{and} ~~~~~~~~&\nonumber\\[-1.5ex]
&\norm{A\bar{x}^k - b} &\leq \frac{\sqrt{\bar{L}}}{0.499k + 1}\left[\norm{y^{*}} + \sqrt{\norm{y^{*}}^2 + 2(D_1 + D_2)}\right],\label{eq:feasible_gap}
\end{eqnarray}
where $\bar{L} := 2\displaystyle\max_{1\leq i\leq 2}\left\{ \frac{\norm{A_{i}}^2}{\sigma_i}\right\}$ and $y^{*}\in Y^{*}$.
\end{theorem}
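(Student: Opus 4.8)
The plan is to combine the excessive gap machinery already established with the explicit decay of the smoothness parameters. By Lemma~\ref{le:intial_point} together with the initialization $\beta_1^0 = \beta_2^0 = \sqrt{\bar{L}}$ (so that $\beta_1^0\beta_2^0 = \bar{L} \geq \bar{L}$ exactly satisfies the starting condition \eqref{eq:initial_point_cond}), the initial point $(\bar{x}^0,\bar{y}^0)$ satisfies the excessive gap condition \eqref{eq:excessive_gap}. Then, by an inductive application of Theorem~\ref{th:main_rule}: provided the invariant $\beta_1^k\beta_2^k \geq \frac{\tau_k^2}{(1-\tau_k)^2}\bar{L}$ holds at each step, the update scheme $\mathcal{A}^p_m$ preserves the excessive gap condition for $(\beta_1^{k+1},\beta_2^{k+1})$. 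The discussion preceding Lemma~\ref{le:choice_of_tau} shows precisely that the rule $\tau_{k+1} = \tau_k/(\tau_k+1)$ is exactly what keeps this invariant self-propagating, so induction carries the excessive gap condition along the whole sequence.

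Next I would invoke Lemma~\ref{le:excessive_gap}, which converts the excessive gap condition at stage $k$ into the two bounds
\[
\phi(\bar{x}^k) - d(\bar{y}^k) \leq \beta_1^k(D_1 + D_2)
\quad\textrm{and}\quad
\norm{A\bar{x}^k - b} \leq \beta_2^k\left[\norm{y^{*}} + \sqrt{\norm{y^{*}}^2 + \tfrac{2\beta_1^k}{\beta_2^k}(D_1+D_2)}\right].
\]
It now remains only to substitute the closed-form values of $\beta_1^k$ and $\beta_2^k$. By Lemma~\ref{le:choice_of_tau} with $\tau_0 = 0.499$ and $\beta_1^0 = \beta_2^0 = \sqrt{\bar{L}}$, one has $\beta_1^k = \beta_2^k = \frac{\sqrt{\bar{L}}}{0.499k+1}$ for all $k\geq 0$. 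Plugging $\beta_1^k = \sqrt{\bar{L}}/(0.499k+1)$ into the first bound gives \eqref{eq:duality_gap} directly. For the feasibility estimate, since $\beta_1^k = \beta_2^k$ the ratio $\beta_1^k/\beta_2^k = 1$, so the square-root term collapses to $\sqrt{\norm{y^{*}}^2 + 2(D_1+D_2)}$, and the prefactor $\beta_2^k = \sqrt{\bar{L}}/(0.499k+1)$ yields exactly \eqref{eq:feasible_gap}.

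The only genuinely nontrivial step is establishing the inductive invariant, i.e. verifying that condition \eqref{eq:main_condition} holds at every iteration so that Theorem~\ref{th:main_rule} applies. This is the heart of the argument: one must check that the base case $\beta_1^0\beta_2^0 = \bar{L} \geq \frac{\tau_0^2}{(1-\tau_0)^2}\bar{L}$ holds (which is immediate since $\tau_0 = 0.499 < \tfrac12$ forces $\tau_0/(1-\tau_0) < 1$), and that the step $\tau_{k+1}=\tau_k/(\tau_k+1)$ propagates it — both facts are exactly what the paragraph following Theorem~\ref{th:main_rule} and the identity \eqref{eq:beta_estimation} supply. Everything after the invariant is mechanical substitution of the formulas from Lemma~\ref{le:choice_of_tau} into the bounds of Lemma~\ref{le:excessive_gap}, with the convenient simplification that equal smoothness parameters $\beta_1^k = \beta_2^k$ make the feasibility bound clean.
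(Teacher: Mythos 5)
Your proposal is correct and follows essentially the same route as the paper: verify the initial excessive gap condition via Lemma~\ref{le:intial_point}, propagate it with Theorem~\ref{th:main_rule} under the $\tau$-update rule, obtain $\beta_1^k=\beta_2^k=\sqrt{\bar{L}}/(0.499k+1)$ from Lemma~\ref{le:choice_of_tau}, and substitute into Lemma~\ref{le:excessive_gap}. The only difference is that you spell out the inductive invariant explicitly, which the paper leaves implicit.
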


\begin{proof}
By the choice of $\beta_1^0 = \beta_2^0 = \sqrt{\bar{L}}$ and Steps 1 in the initialization phase of Algorithm \ref{alg:A1} we see that $\beta_1^k = \beta_2^k$
for all $k \geq 0$.
Moreover, since $\tau_0 = 0.499$, by Lemma \ref{le:choice_of_tau}, we have $\beta_1^k = \beta_2^k = \frac{\beta_0}{\tau_0k + 1}  =
\frac{\sqrt{\bar{L}}}{0.499k + 1}$.
Now, by applying Lemma \ref{le:excessive_gap} with $\beta_1$ and $\beta_2$ equal to $\beta_1^k$ and $\beta_2^k$ respectively, we obtain the estimates
\eqref{eq:duality_gap} and \eqref{eq:feasible_gap}.
\Eproof
\end{proof}

\begin{remark}\label{re:alg1}
The worst case complexity of Algorithm \ref{alg:A1} is $O(\frac{1}{\varepsilon})$. However, the constants in the estimations \eqref{eq:duality_gap} and \eqref{eq:feasible_gap} also depend on the choices of
$\beta_1^0$ and $\beta_2^0$, which satisfy the condition \eqref{eq:initial_point_cond}.
The values of $\beta_1^0$ and $\beta_2^0$ will affect the accuracy of the duality and feasibility gaps. 
\end{remark}
In Algorithm \ref{alg:A1} we can use a simple update rule $\tau_k = \frac{a}{k+1}$, where $a > 0$ is arbitrarily chosen such that the condition $\tau_{k+1} \leq
\frac{\tau_k}{\tau_k + 1}$ holds. However, the rule \eqref{eq:tau_sequence} is the tightest one.

\section{Switching decomposition algorithm}\label{sec:decom_primal_dual_acceleration}
In this section, we apply the switching strategy to obtain a new variant of the first algorithm proposed in
\cite[Algorithm 1]{Nesterov2005} for solving problem \eqref{eq:distNLP}.
This scheme alternately switches between the primal and dual step depending on the iteration counter $k$ being even or odd.
Apart from its application to Lagrangian dual decomposition, this variant is still different from the one in \cite{Nesterov2005} at two points. First, since we
assume that the objective function is not necessarily smooth, instead of using the gradient mapping in the primal scheme, we use the proximal mapping defined
by \eqref{eq:prox_mapping_xy} to construct the primal step.
In contrast, since the objective function in the dual scheme is Lipschitz continuously differentiable, we can directly use the gradient mapping to compute
$\bar{y}^{+}$ (see \eqref{eq:dual_step}).  
Second, we use the exact update rule for $\tau$ instead of the simplified one as in \cite{Nesterov2005}.

\subsection{The gradient mapping of the smoothed dual function}
Since the smoothed dual function $d(\cdot; \beta_1)$ is Lipschitz continuously differentiable on $\mathbb{R}^m$ (see Lemma \ref{le:smoothing_estimate}). We
define the following mapping:
\begin{equation}\label{eq:dual_prox_mapping}
G(\hat{y}; \beta_1) := \textrm{arg}\!\max_{y\in\mathbb{R}^m}\left\{\nabla d(\hat{y}; \beta_1)^T(y -\hat{y}) -
\frac{L^{d}(\beta_1)}{2}\norm{y-\hat{y}}^2\right\},  
\end{equation}
where $L^{d}(\beta_1) := L_1^{d}(\beta_1) + L_2^{d}(\beta_1) = \frac{\norm{A_{1}}^2}{\beta_1\sigma_1} + \frac{\norm{A_{2}}^2}{\beta_1\sigma_2}$ and $\nabla
d(\hat{y}; \beta_1) = A_{1}x_{1}^{*}(\hat{y}; \beta_1) + A_{2}x_{2}^{*}(\hat{y}; \beta_1) - b$. 
This problem can explicitly be solved to get the unique solution:
\begin{equation}\label{eq:dual_prox_mapping_ex}
G(\hat{y}; \beta_1) =  \frac{1}{L^{d}(\beta_1)}[Ax^{*}(\hat{y}; \beta_1) - b] + \hat{y}. 
\end{equation}
The mapping $G(\cdot; \beta_1)$ is called gradient mapping of the function $d(\cdot; \beta_1)$ (see \cite{Nesterov2004}).

\subsection{A decomposition scheme with primal-dual update}
First, we adapt the scheme \eqref{eq:main_update_rule}-\eqref{eq:main_update_rule_beta} in the framework of primal and dual variant.
Suppose that the pair $(\bar{x}, \bar{y})\in X\times\mathbb{R}^m$ satisfies the excessive gap condition \eqref{eq:excessive_gap}. The primal step is computed as
follows:
\begin{equation}\label{eq:primal_step}
(\bar{x}^{+}, \bar{y}^{+}) := \mathcal{A}^p(\bar{x}, \bar{y}; \beta_1, \beta_2, \tau) ~\Longleftrightarrow~
\begin{cases}
\hat{x} := (1-\tau)\bar{x} + \tau x^{*}(\bar{y}; \beta_1),\\
\bar{y}^{+} := (1-\tau)\bar{y} + \tau y^{*}(\hat{x}; \beta_2), \\
\bar{x}^{+} := P(\hat{x}; \beta_2), 
\end{cases} 
\end{equation}
and then we update $\beta_1^{+} := (1-\tau)\beta_1$, where $\tau \in (0, 1)$ and $P(\cdot; \beta_2)$ is defined in \eqref{eq:prox_mapping_xy}. 
The difference between schemes $\mathcal{A}^p_m$ and $\mathcal{A}^p$ is that the parameter $\beta_2$ is fixed in $\mathcal{A}^p$.

Symmetrically, the dual step is computed as:
\begin{align}\label{eq:dual_step}
&(\bar{x}^{+}, \bar{y}^{+}) := \mathcal{A}^d(\bar{x},\bar{y}; \beta_1, \beta_2, \tau) \Longleftrightarrow 
\begin{cases}
&\hat{y} := (1-\tau)\bar{y} + \tau y^{*}(\bar{x}; \beta_2),\\
&\bar{x}^{+} : = (1-\tau)\bar{x} + \tau x^{*}(\hat{y}; \beta_1),\\
&\bar{y}^{+} := G(\hat{y}; \beta_1),
\end{cases} 
\end{align}
where $\tau \in (0,1)$. The parameter $\beta_1$ is kept unchanged, while $\beta_2$ is updated by $\beta_2^{+} := (1-\tau)\beta_2$.

The following result shows that $(\bar{x}^{+}, \bar{y}^{+})$ generated either by $\mathcal{A}^p$ or by $\mathcal{A}^d$ maintains the excessive gap condition \eqref{eq:excessive_gap}.

\begin{lemma}\label{le:dual_main}
Suppose that $(\bar{x}, \bar{y})\in X\times\mathbb{R}^m$ satisfy \eqref{eq:excessive_gap} with respect to two values $\beta_1$ and $\beta_2$. Then
$(\bar{x}^{+}, \bar{y}^{+})$ generated either by scheme $\mathcal{A}^p$ or by  $\mathcal{A}^d$ is in $X\times\mathbb{R}^m$ and maintains the excessive gap
condition \eqref{eq:excessive_gap} with respect to either two new values $\beta_1^{+}$ and $\beta_2$ or $\beta_1$ and $\beta_2^{+}$ provided that the
following condition holds:
\begin{equation}\label{eq:main_condition_new}
\beta_1\beta_2 \geq \frac{2\tau^2}{1-\tau}\max_{1\leq i\leq 2}\left\{\frac{\norm{A_i}^2}{\sigma_i}\right\}. 
\end{equation}
\end{lemma}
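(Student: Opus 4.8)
The plan is to verify the excessive gap condition \eqref{eq:excessive_gap} separately for the two schemes, exploiting the fact that $\mathcal{A}^p$ is a minor variant of $\mathcal{A}^p_m$ and that $\mathcal{A}^d$ is its exact symmetric counterpart.

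\textbf{Primal step $\mathcal{A}^p$.} The scheme \eqref{eq:primal_step} is \emph{identical} to $\mathcal{A}^p_m$ of Theorem~\ref{th:main_rule} except that the second parameter is frozen, $\beta_2^{+}=\beta_2$. I would rerun the computation of Theorem~\ref{th:main_rule} verbatim; the only change is that the rescaling identity \eqref{eq:th31_est1b}, namely $\psi(\bar x;\beta_2)=(1-\tau)\psi(\bar x;\beta_2^{+})$, now collapses to a triviality. Consequently the term $-\tau\psi(\bar x;\beta_2^{+})$ never enters the estimate of $T_1$, the leftover quantity reduces to $\mathbf{T}_3=\tau\psi(\hat x;\beta_2)+\tfrac{1-\tau}{2\beta_2}\norm{A(\bar x-\hat x)}^2$, which is manifestly nonnegative (no completion of squares as in \eqref{eq:th31_est6} is needed), and the quadratic-matching requirement \eqref{eq:main_condition_tmp} relaxes to $(1-\tau)\beta_1\sigma_i\ge \tau^2 L_i^{\psi}(\beta_2)=\tfrac{2\tau^2}{\beta_2}\norm{A_i}^2$, which is exactly \eqref{eq:main_condition_new}. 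The proximal-mapping step then gives $d(\bar y^{+};\beta_1^{+})\ge f(\bar x^{+};\beta_2)$.

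\textbf{Dual step $\mathcal{A}^d$.} Here I argue symmetrically, now \emph{upper}-bounding $f$ rather than lower-bounding $d$. Write $x^2:=x^{*}(\hat y;\beta_1)$, $w:=Ax^2-b=\nabla d(\hat y;\beta_1)$ and $\tilde y:=y^{*}(\bar x;\beta_2)=\beta_2^{-1}(A\bar x-b)$, so that $\hat y=(1-\tau)\bar y+\tau\tilde y$ and $\bar x^{+}=(1-\tau)\bar x+\tau x^2$. Convexity of $\phi$ together with the exact expansion $\psi(\bar x^{+};\beta_2^{+})=\tfrac{1}{2(1-\tau)\beta_2}\norm{(1-\tau)(A\bar x-b)+\tau w}^2$ yields the symmetric analogue of the primal $\min$-splitting,
\[
f(\bar x^{+};\beta_2^{+})\le (1-\tau)f(\bar x;\beta_2)+\tau\bigl[\phi(x^2)+w^T\tilde y\bigr]+\tfrac{\tau^2}{2(1-\tau)\beta_2}\norm{w}^2 .
\]
I bound the first term by $(1-\tau)d(\bar y;\beta_1)$ via \eqref{eq:excessive_gap}, and rewrite the middle bracket through $d(\hat y;\beta_1)=\phi(x^2)+w^T\hat y+\beta_1[p_1(x_1^2)+p_2(x_2^2)]$, discarding the nonnegative $\beta_1p(x^2)$, to get $\phi(x^2)+w^T\tilde y\le d(\hat y;\beta_1)+w^T(\tilde y-\hat y)$. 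For the lower bound on $d(\bar y^{+};\beta_1)$ I use that $d(\cdot;\beta_1)$ is concave with $L^d(\beta_1)$-Lipschitz gradient (Lemma~\ref{le:smoothing_estimate}) and that $\bar y^{+}=G(\hat y;\beta_1)=\hat y+\tfrac{1}{L^d(\beta_1)}w$ by \eqref{eq:dual_prox_mapping_ex}; combining the ascent inequality with concavity gives, for every $y$, $d(\bar y^{+};\beta_1)\ge d(y;\beta_1)+w^T(\hat y-y)+\tfrac{1}{2L^d(\beta_1)}\norm{w}^2$. Taking the convex combination of this bound at $y=\bar y$ and $y=\hat y$ with weights $1-\tau$ and $\tau$ produces a cross term $\tau(1-\tau)w^T(\tilde y-\bar y)$. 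Since $\tilde y-\hat y=(1-\tau)(\tilde y-\bar y)$, this is \emph{exactly} the cross term left in the $f$-bound; subtracting the two estimates cancels $(1-\tau)d(\bar y;\beta_1)$, $\tau d(\hat y;\beta_1)$ and the cross terms, leaving $d(\bar y^{+};\beta_1)-f(\bar x^{+};\beta_2^{+})\ge\bigl[\tfrac{1}{2L^d(\beta_1)}-\tfrac{\tau^2}{2(1-\tau)\beta_2}\bigr]\norm{w}^2\ge 0$ whenever $(1-\tau)\beta_1\beta_2\ge \tau^2\sum_{i}\norm{A_i}^2/\sigma_i$, which is implied by \eqref{eq:main_condition_new} because $\sum_{i=1}^2\norm{A_i}^2/\sigma_i\le 2\max_i\norm{A_i}^2/\sigma_i$.

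\textbf{Main obstacle.} The primal case is routine bookkeeping built on Theorem~\ref{th:main_rule}. The real difficulty is the dual step: the ascent estimate for $d(\bar y^{+};\beta_1)$ unavoidably drags in the \emph{uncontrolled} current iterate $\bar y$ through $w^T(\hat y-\bar y)$, so a naive bound does not close. The proof succeeds only because the identical $\bar y$-dependent quantity is manufactured by the quadratic expansion of $\psi(\bar x^{+};\beta_2^{+})$, and the two cancel thanks to the specific averaging $\hat y=(1-\tau)\bar y+\tau\tilde y$. Pinning down this cancellation, and tracking that keeping one of the two parameters fixed per step (here $\beta_1$) is precisely what weakens the denominator in the step-size condition from $(1-\tau)^2$ in Theorem~\ref{th:main_rule} to $(1-\tau)$ in \eqref{eq:main_condition_new}, are the two steps requiring genuine care.
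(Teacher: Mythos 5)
Your proof is correct. Note that the paper itself supplies no argument for this lemma (it defers to Theorem 4.2 of Nesterov's excessive gap paper), so there is no in-paper proof to compare against; your write-up is essentially the natural adaptation of that reference, and of the paper's own Theorem \ref{th:main_rule}, to the two one-parameter updates. For the primal step, freezing $\beta_2$ indeed removes the rescaling step \eqref{eq:th31_est1b}, makes $\mathrm{\textbf{T}}_3$ trivially nonnegative, and turns the matching condition into $(1-\tau)\beta_1\sigma_i\beta_2\geq 2\tau^2\norm{A_i}^2$, which is exactly \eqref{eq:main_condition_new}. For the dual step, I checked the key cancellation: the cross term $\tau w^T(\tilde y-\hat y)=\tau(1-\tau)w^T(\tilde y-\bar y)$ produced by expanding $\psi(\bar x^{+};\beta_2^{+})$ coincides with the term $(1-\tau)w^T(\hat y-\bar y)$ coming from the concavity/ascent bound on $d(\cdot;\beta_1)$, and the residual $\bigl[\tfrac{1}{2L^d(\beta_1)}-\tfrac{\tau^2}{2(1-\tau)\beta_2}\bigr]\norm{w}^2$ is nonnegative under \eqref{eq:main_condition_new} since $\sum_{i=1}^2\norm{A_i}^2/\sigma_i\leq 2\max_i\norm{A_i}^2/\sigma_i$. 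The only minor point worth making explicit is that discarding $\beta_1[p_1(x_1^2)+p_2(x_2^2)]$ uses $p_i\geq 0$ on $X_i$, which holds because the paper normalizes $p_i(x_i^c)=0$ at the minimizer $x_i^c$.
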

The proof of this lemma is quite similar to \cite[Theorem 4.2.]{Nesterov2005} that we omit here.

\begin{remark}\label{re:dual_init}
Given $\beta_1 > 0$, we can choose $\beta_2 > 0$ such that the condition \eqref{eq:initial_point_cond} holds. 
Let $y_c := 0 \in\mathbb{R}^m$, we compute a point $(\bar{x}^0, \bar{y}^0)$ as:
\begin{equation}\label{eq:init_dual}
\bar{x}^0 := x^{*}(y^c;  \beta_1) ~~ \textrm{and} ~~ \bar{y}^0 := G(y^c; \beta_1) = \frac{1}{L_{d}(\beta_1)}(A\bar{x} - c) + y^c.  
\end{equation}
Then, similar to \eqref{eq:initial_point}, the point $(\bar{x}^0, \bar{y}^0)$ satisfies \eqref{eq:excessive_gap}. Therefore, we can use this point as a starting
point for Algorithm \ref{alg:A2} below. 
\end{remark}

In Algorithm \ref{alg:A2} below we apply either the primal scheme $\mathcal{A}^p$ or the dual scheme $\mathcal{A}^d$ by using the following rule:

\noindent\textbf{Rule A. }\textit{If the iteration counter $k$ is even then apply $\mathcal{A}^p$. Otherwise, $\mathcal{A}^d$ is used.}

Now, we provide an update rule to generate a sequence $\{\tau_k\}$ such that the condition \eqref{eq:main_condition_new} holds.
Let $\bar{L} := 2\displaystyle\max_{1\leq i\leq 2}\left\{\frac{\norm{A_{i}}^2}{\sigma_i}\right\}$. 
Suppose that at the iteration $k$ the condition \eqref{eq:main_condition_new} holds, i.e.:
\begin{equation}\label{eq:cond_1}
\beta_1^k\beta_2^k \geq \frac{\tau_k^2}{1-\tau_k}\bar{L}. 
\end{equation}
Since at the iteration $k+1$, we either update $\beta_1^k$ or $\beta_2^k$. Thus we have $\beta_1^{k+1}\beta_2^{k+1} = (1-\tau_k)\beta_1^k\beta_2^k$. However,
as the condition \eqref{eq:cond_1} holds, we have $(1-\tau_k)\beta_1^k\beta_2^k \geq
\tau_k^2\bar{L}$.
Now, we suppose that the condition \eqref{eq:main_condition_new} is satisfied with $\beta_1^{k+1}$ and $\beta_2^{k+1}$, i.e.:
\begin{equation}\label{eq:cond_2}
\beta_1^{k+1}\beta_2^{k+1} \geq \frac{\tau_{k+1}^2}{1-\tau_{k+1}}\bar{L}. 
\end{equation}
This condition holds if $\tau_k^2\bar{L} \geq \frac{\tau_{k+1}^2}{1-\tau_{k+1}}\bar{L}$, which leads to $\tau^2_{k+1} + \tau_k^2\tau_{k+1} - \tau_k^2 \leq 0$.
Since $\tau_k, \tau_{k+1} \in (0,1)$, we obtain:
\begin{equation}\label{eq:tau_k_update}
0 < \tau_{k+1} \leq \frac{\tau_k}{2}\left[\sqrt{\tau^2_k + 4}-\tau_k\right] < \tau_k. 
\end{equation}
The tightest rule for updating $\tau_k$ is: 
\begin{equation}\label{eq:tau_update}
\tau_{k+1} := \frac{\tau_k}{2}\left[\sqrt{\tau^2_k + 4}-\tau_k\right], 
\end{equation}
for all $k\geq 0$ and $\tau_0\in (0,1)$ given. 
Associated with $\{\tau_k\}$, we generate two sequences $\{\beta_1^k\}$ and $\{\beta_2^k\}$ as:
\begin{equation}\label{eq:b1b2_update}
\beta_1^{k+1} := \begin{cases}(1-\tau_k)\beta_1^k ~&\textrm{if $k$ is even}\\ \beta_1^k &\textrm{otherwise},\end{cases} ~~\textrm{and}~~
\beta_2^{k+1} := \begin{cases}\beta_2^k ~&\textrm{if $k$ is even}\\ (1-\tau_k)\beta_2^k &\textrm{otherwise},\end{cases} 
\end{equation}
where $\beta_1^0 = \beta_2^0 = \bar{\beta} > 0$ are fixed.

\begin{lemma}\label{le:update_tau}
Let $\{\tau_k\}$, $\{\beta_1^k\}$ and $\{\beta_2^k\}$ be three sequences generated by \eqref{eq:tau_update} and \eqref{eq:b1b2_update}, respectively. Then:
\begin{equation}\label{eq:tau_est2}
\frac{(1-\tau_0)\bar{\beta}}{2\tau_0k+1} < \beta_1^k < \frac{2\bar{\beta}\sqrt{1-\tau_0}}{\tau_0k}, ~~\mathrm{and}~~
\frac{\bar{\beta}\sqrt{1-\tau_0}}{2\tau_0k+1} < \beta_2^k < \frac{2\bar{\beta}}{\tau_0k},
\end{equation}
for all $k\geq 1$.
\end{lemma}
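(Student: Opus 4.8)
The plan is to reduce the statement to three ingredients: sharp two-sided bounds on $\tau_k$, an exact formula for the product $\beta_1^k\beta_2^k$, and a separate comparison between $\beta_1^k$ and $\beta_2^k$. First I would extract from \eqref{eq:tau_update} the equivalent algebraic identity $\tau_{k+1}^2 = (1-\tau_{k+1})\tau_k^2$ (recall \eqref{eq:tau_update} is precisely the positive root of $\tau_{k+1}^2 + \tau_k^2\tau_{k+1} - \tau_k^2 = 0$). Setting $a_k := 1/\tau_k$, this becomes $a_{k+1}^2 - a_{k+1} = a_k^2$, hence $(a_{k+1}-a_k)(a_{k+1}+a_k) = a_{k+1}$. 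Since $\{\tau_k\}$ is strictly decreasing by \eqref{eq:tau_k_update} (so $0 < a_k < a_{k+1}$), this forces $\tfrac12 < a_{k+1}-a_k < 1$, and telescoping from $a_0 = 1/\tau_0$ yields $\tfrac{1}{\tau_0}+\tfrac{k}{2} < \tfrac{1}{\tau_k} < \tfrac{1}{\tau_0}+k$, i.e.
\begin{equation*}
\frac{\tau_0}{1+\tau_0 k} < \tau_k < \frac{2\tau_0}{2+\tau_0 k}, \qquad k\ge 0.
\end{equation*}

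Next I would compute $\beta_1^k\beta_2^k$ exactly. The identity $1-\tau_j = \tau_j^2/\tau_{j-1}^2$ (valid for $j\ge 1$) telescopes to $\prod_{j=1}^{k-1}(1-\tau_j) = \tau_{k-1}^2/\tau_0^2$. Because \eqref{eq:b1b2_update} multiplies exactly one of $\beta_1,\beta_2$ by $(1-\tau_k)$ at each step and leaves the other fixed, the product picks up every factor once, so $\beta_1^k\beta_2^k = \bar{\beta}^2\prod_{j=0}^{k-1}(1-\tau_j) = \bar{\beta}^2(1-\tau_0)\tau_{k-1}^2/\tau_0^2$.

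The crux, and the step I expect to be the main obstacle, is to separate $\beta_1^k$ and $\beta_2^k$, since the product alone only pins down their geometric mean. Writing $\beta_1^k = \bar{\beta}\prod_{j<k,\,j\text{ even}}(1-\tau_j)$ and $\beta_2^k = \bar{\beta}\prod_{j<k,\,j\text{ odd}}(1-\tau_j)$, I would establish
\begin{equation*}
(1-\tau_0)\,\beta_2^k < \beta_1^k < \beta_2^k, \qquad k\ge 1,
\end{equation*}
by a term-by-term monotonicity argument exploiting that $j\mapsto 1-\tau_j$ is increasing. After factoring the base term $1-\tau_0$ out of $\beta_1^k$, each odd-index factor of one product can be matched with a neighbouring even-index factor of strictly larger value, giving both inequalities; splitting into $k$ even and $k$ odd and carefully accounting for the single leftover factor when the two index sets have unequal cardinality is the only delicate point.

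Finally I would assemble the four bounds. From $\beta_1^k < \beta_2^k$ and the product formula, $\beta_1^k < \sqrt{\beta_1^k\beta_2^k} = \bar{\beta}\sqrt{1-\tau_0}\,\tau_{k-1}/\tau_0$ and symmetrically $\beta_2^k > \bar{\beta}\sqrt{1-\tau_0}\,\tau_{k-1}/\tau_0$; from $(1-\tau_0)\beta_2^k < \beta_1^k$ one gets $\beta_2^k < \bar{\beta}\,\tau_{k-1}/\tau_0$, and then $\beta_1^k = (\beta_1^k\beta_2^k)/\beta_2^k > \bar{\beta}(1-\tau_0)\tau_{k-1}/\tau_0$. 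Substituting the $\tau_{k-1}$ estimates from the first step and using $0<\tau_0<1$ to pass from $2+\tau_0(k-1)$ to the looser denominator $\tau_0 k$ (for the upper bounds) and from $1+\tau_0(k-1)$ to $2\tau_0 k+1$ (for the lower bounds) reproduces exactly \eqref{eq:tau_est2}. A direct check at $k=1$, where $\beta_1^1=(1-\tau_0)\bar{\beta}$ and $\beta_2^1=\bar{\beta}$, confirms the endpoints are consistent.
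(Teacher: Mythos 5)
Your proposal is correct and follows essentially the same route as the paper: two-sided $O(1/k)$ bounds on $\tau_k$ obtained from the reciprocal recursion (your $a_{k+1}^2-a_{k+1}=a_k^2$ is the paper's $\xi$-function estimate in disguise), the telescoped identity $\prod_{j=0}^{k-1}(1-\tau_j)=(1-\tau_0)\tau_{k-1}^2/\tau_0^2$, and a monotone pairing of the even- and odd-indexed factors, which is exactly the content of the paper's inequality \eqref{eq:lm42_est3}. The only nit is that $(1-\tau_0)\beta_2^k<\beta_1^k$ holds with equality (not strictly) at $k=1$, but your separate direct check of that case closes the gap.
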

The proof of this lemma can be found in the appendix.

\begin{remark}\label{re:choice_of_tau2}
We can see that the right-hand side $\eta_k(\tau_0) := \frac{4\bar{\beta}\sqrt{1-\tau_0}}{\tau_0(k+\tau_0)}$ of \eqref{eq:tau_est2} is decreasing in $(0,1)$ for
$k\geq 1$. Therefore, we can choose $\tau_0$ as large as possible to minimize $\eta_k(\cdot)$ in $(0, 1)$. For instance, we can choose $\tau_0 := 0.998$ in
Algorithm \ref{alg:A2}.
\end{remark}
Note that Lemma \ref{le:update_tau} shows that $\tau_k \sim O(\frac{1}{k})$. Hence, in Algorithm \ref{alg:A2}, we can also use a
simple updating rule for $\tau_k$ as $\tau_k = \frac{a}{k+b}$, where $a \in (\frac{3}{2}, 2)$ and $b\geq
\frac{a-1}{2-a} > 0$. This update satisfies \eqref{eq:main_condition_new}.

\subsection{The algorithm and its worst-case complexity}
Suppose that the initial point $(\bar{x}^0, \bar{y}^0)$ is computed by \eqref{eq:init_dual}. Then, we can choose $\beta_1^0 = \beta_2^0 =
\sqrt{2\displaystyle\max_{1\leq i\leq 2}\left\{\frac{\norm{A_{i}}^2}{\sigma_i} \right\}}$ which satisfy \eqref{eq:initial_point_cond}. 
The algorithm is now presented in detail as follows:

\vskip0.3cm
\noindent\rule[1pt]{\textwidth}{1.0pt}{~~}
\begin{algorithm}\vskip-0.2cm\label{alg:A2}{~}(\textit{Decomposition Algorithm with Primal-Dual Update})
\end{algorithm}
\vskip -0.3cm
\noindent\rule[1pt]{\textwidth}{0.5pt}
\noindent\textbf{Initialization: }
\begin{enumerate}
\item Choose $\tau_0 := 0.998$ and set $\beta_{1}^0 = \beta_2^0 := \sqrt{2\max_{1\leq i\leq 2}\left\{ \frac{\norm{A_{i}}^2}{\sigma_i}\right\}}$.  
\item Compute $\bar{x}^0$ and $\bar{y}^0$ as:
\begin{equation*}
\bar{x}^0 := x^{*}(y^c; \beta_1^0), ~\textrm{and}~ \bar{y}^0 :=  \frac{1}{L_{d}(\beta_1^0)}(A\bar{x}^0 - b) + y^c. 
\end{equation*}
\end{enumerate}
\noindent\textbf{Iteration: }\texttt{For} $k=0,1,\cdots$ \texttt{do}
\begin{enumerate}
\item If a given stopping criterion is satisfied then terminate.
\item If $k$ is even then:
\begin{itemize}
\item[]\textrm{2a)} Compute $(\bar{x}^{k+1}, \bar{y}^{k+1})$ as:
\begin{equation*}\label{eq:main_step_primal}
(\bar{x}^{k+1}, \bar{y}^{k+1}) := \mathcal{A}^p(\bar{x}^k, \bar{y}^k; \beta_1^k, \beta_2^k, \tau_k). 
\end{equation*}
\item[]\textrm{2b)} Update the smoothness parameter $\beta_1^k$ as $\beta_1^{k+1} := (1-\tau_k)\beta_1^k$.
\end{itemize}
\item Otherwise, i.e. if $k$ is odd then:
\begin{itemize}
\item[]\textrm{3a)} Compute $(\bar{x}^{k+1}, \bar{y}^{k+1})$ as:
\begin{equation*}\label{eq:main_step_dual}
(\bar{x}^{k+1}, \bar{y}^{k+1}) := \mathcal{A}^d(\bar{x}^k, \bar{y}^k; \beta_1^k, \beta_2^k, \tau_k). 
\end{equation*}
\item[]\textrm{3b)} Update the smoothness parameter $\beta_2^k$ as $\beta_2^{k+1} := (1-\tau_k)\beta_2^k$.
\end{itemize}
\item Update the step size parameter $\tau_k$ as: $\tau_{k+1} := \frac{\tau_k}{2}\left[\sqrt{\tau^2_k+4}-\tau_k\right]$. 
\end{enumerate}
\texttt{End of For}.
\vskip-0.2cm
\noindent\rule[1pt]{\textwidth}{1.0pt}

The main steps of Algorithm \ref{alg:A2} are Steps 2a and 2b, which requires us to compute either a primal step or a dual step. In the primal step,
we need to solve two convex problem pairs in parallel, while in the dual step, it only requires to solve two convex problems in parallel. 
The following theorem shows the convergence of this algorithm.

\begin{theorem}\label{th:convergence2}
Let the sequence $\{ (\bar{x}^k,\bar{y}^k) \}_{k\geq 0}$ be generated by Algorithm \ref{alg:A2}. Then the duality and feasibility gaps satisfy:
\begin{eqnarray}
&\phi(\bar{x}^k) - d(\bar{y}^k) &\leq \frac{2\sqrt{\bar{L}}(D_1 + D_2)}{0.998k}, \label{eq:duality_gap2}\\
[-1.5ex]
\textrm{and}~ && \nonumber\\
[-1.5ex]
& \norm{A\bar{x}^k - b} &\leq \frac{2\sqrt{\bar{L}}}{0.998k}\left[\norm{y^{*}} + \sqrt{\norm{y^{*}}^2 + 2(D_1 +
D_2)}\right]\label{eq:feasible_gap2},
\end{eqnarray}
where $\bar{L} := 2\displaystyle\max_{1\leq i\leq 2}\left\{ \frac{\norm{A_{i}}^2}{\sigma_i}\right\}$ and $k\geq 1$.
\end{theorem}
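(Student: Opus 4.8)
The plan is to reduce the two estimates to Lemma~\ref{le:excessive_gap} evaluated at the running parameters $\beta_1^k,\beta_2^k$, and then to insert the explicit bounds of Lemma~\ref{le:update_tau}. First I would show, by induction on $k$, that every iterate $(\bar{x}^k,\bar{y}^k)$ satisfies the excessive gap condition \eqref{eq:excessive_gap} with respect to $\beta_1^k$ and $\beta_2^k$. The base case $k=0$ holds by the construction of the starting point in Remark~\ref{re:dual_init}; the inductive step is precisely Lemma~\ref{le:dual_main}, applied with $\mathcal{A}^p$ when $k$ is even and $\mathcal{A}^d$ when $k$ is odd (Rule~A). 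Its hypothesis \eqref{eq:main_condition_new} is available at every iteration because the sequences $\{\tau_k\}$ and $\{\beta_1^k\},\{\beta_2^k\}$ are updated by \eqref{eq:tau_update} and \eqref{eq:b1b2_update}, which were designed exactly to propagate \eqref{eq:main_condition_new} from one iteration to the next.

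Given this, Lemma~\ref{le:excessive_gap} yields at once $\phi(\bar{x}^k)-d(\bar{y}^k)\le\beta_1^k(D_1+D_2)$ together with
\begin{equation*}
\norm{A\bar{x}^k-b}\le\beta_2^k\Big[\norm{y^{*}}+\sqrt{\norm{y^{*}}^2+\tfrac{2\beta_1^k}{\beta_2^k}(D_1+D_2)}\Big].
\end{equation*}
For the duality gap I would simply invoke the upper estimate $\beta_1^k<\tfrac{2\bar\beta\sqrt{1-\tau_0}}{\tau_0k}<\tfrac{2\bar\beta}{\tau_0k}$ of Lemma~\ref{le:update_tau}, discard the factor $\sqrt{1-\tau_0}<1$, and substitute $\bar\beta=\sqrt{\bar L}$ and $\tau_0=0.998$, which gives \eqref{eq:duality_gap2}.

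The feasibility estimate is where the only real difficulty appears, because the switching scheme forces $\beta_1^k\ne\beta_2^k$ (in contrast to Algorithm~\ref{alg:A1}, where the two coincide), so the ratio $\beta_1^k/\beta_2^k$ under the square root must be controlled; the individual bounds of Lemma~\ref{le:update_tau} only yield $\beta_1^k/\beta_2^k=O(1)$, which is too weak to reproduce the constant $2(D_1+D_2)$ in \eqref{eq:feasible_gap2}. Instead I would read off from \eqref{eq:b1b2_update} the product representation
\begin{equation*}
\beta_1^k=\bar\beta\!\!\prod_{\substack{0\le j<k\\ j\ \mathrm{even}}}\!\!(1-\tau_j),\qquad
\beta_2^k=\bar\beta\!\!\prod_{\substack{0\le j<k\\ j\ \mathrm{odd}}}\!\!(1-\tau_j),
\end{equation*}
and use that $\{\tau_k\}$ is strictly decreasing by \eqref{eq:tau_k_update}, so $\{1-\tau_k\}$ is strictly increasing. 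Matching the factors of $\beta_1^k$ and $\beta_2^k$ in order pairs $(1-\tau_{2i})$ with $(1-\tau_{2i+1})\ge(1-\tau_{2i})$, while $\beta_1^k$ carries at most one extra factor that is less than one; hence $\beta_1^k\le\beta_2^k$ for every $k\ge1$.

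Therefore $\tfrac{2\beta_1^k}{\beta_2^k}(D_1+D_2)\le2(D_1+D_2)$, and combining this with $\beta_2^k<\tfrac{2\bar\beta}{\tau_0k}$ and the substitution $\bar\beta=\sqrt{\bar L}$, $\tau_0=0.998$ produces \eqref{eq:feasible_gap2}. The main obstacle is thus the ratio bound $\beta_1^k\le\beta_2^k$, which rests on the monotonicity of $\{\tau_k\}$ and the alternating product structure \eqref{eq:b1b2_update}; everything else is a direct substitution of the lemmas already established.
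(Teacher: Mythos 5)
Your proof is correct and follows essentially the same route as the paper's (one-line) argument: maintain the excessive gap condition by induction via Lemma \ref{le:dual_main}, invoke Lemma \ref{le:excessive_gap} at $(\beta_1^k,\beta_2^k)$, and substitute the bounds of Lemma \ref{le:update_tau} together with $\beta_1^k\le\beta_2^k$. In fact you supply the one ingredient the paper merely asserts --- the inequality $\beta_1^k\le\beta_2^k$ --- and your factor-pairing argument, based on the monotonicity of $\{\tau_k\}$ and the alternating products \eqref{eq:b1b2_update}, establishes it correctly.
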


\begin{proof}
The conclusion of this theorem follows directly from Lemmas \ref{le:excessive_gap} and \ref{le:choice_of_tau}, the condition $\tau_0 = 0.998$, $\beta^0_1 =
\beta_2^0 = \sqrt{\bar{L}}$ and the fact that $\beta_1^k\leq
\beta_2^k$. 
\Eproof
\end{proof}

\begin{remark}\label{re:complexity2}
Note that the worst-case complexity of Algorithm \ref{alg:A2} is still $O(\frac{1}{\varepsilon})$. The constants in the complexity estimates
\eqref{eq:duality_gap} and \eqref{eq:feasible_gap} are similar to the one in \eqref{eq:duality_gap2} and \eqref{eq:feasible_gap2}, respectively.
As we discuss in Section \ref{sec:discussion} below, the rate of decrease of $\tau_k$ in Algorithm \ref{alg:A2} is smaller than two times of $\tau_k$ in Algorithm \ref{alg:A1}. Consequently, the sequences $\{\beta_1^k\}$ and $\{\beta_2^k\}$ generated by Algorithm \ref{alg:A1}
approach zero faster than the ones generated by Algorithm \ref{alg:A2}.
\end{remark}
 
\begin{remark}\label{re:B_rule}
Note that the role of the schemes $\mathcal{A}^p$ and $\mathcal{A}^d$ in Algorithm \ref{alg:A2} can be exchanged. Therefore,  Algorithm \ref{alg:A2} can be
modified at three steps to obtain a symmetric variant as follows:
\begin{enumerate}
\item At Step 2 of the initialization phase, \eqref{eq:initial_point} to compute $\bar{x}^0$ and $\bar{y}^0$ instead of \eqref{eq:init_dual}. 
\item At Steps 2a, $\mathcal{A}^p$ is used if the iteration counter $k$ is odd. Otherwise, we use $\mathcal{A}^d$ at Step 3a.
\item At Steps 2b, $\beta_2^k$ is updated if $k$ is odd. Otherwise, $\beta_1^k$ is updated at Step 3b.
\end{enumerate}
\end{remark}

\section{Application to strongly convex programming problems}\label{sec:strongly_convex_case}
If $\phi_{i}$ ($i=1,2$) in \eqref{eq:distNLP} is strongly convex then the convergence rate of the dual scheme \eqref{eq:dual_step} can be accelerated up to $O(\frac{1}{k^2})$. 

Suppose that $\phi_i$ is strongly convex with a convexity parameters $\sigma_i > 0$ ($i=1,2$). Then the function $d$ defined by \eqref{eq:dual_func} is
well-defined,
concave and differentiable. Moreover, its gradient is given by:
\begin{equation}\label{eq:sec5_dphi}
\nabla d(y) = A_{1}x^{*}_{1}(y) + A_{2}x^{*}_{2}(y) - b, 
\end{equation}
which is Lipschitz continuous with a Lipschitz constant $L^{\phi} := \frac{\norm{A_{1}}^2}{\sigma_1} + \frac{\norm{A_{2}}^2}{\sigma_2}$.
The excessive gap condition \eqref{eq:excessive_gap} in this case becomes:
\begin{equation}\label{eq:sec5_excessive_gap}
f(\bar{x}; \beta_2) \leq d(\bar{y}), 
\end{equation}
for given $\bar{x} \in X$, $\bar{y}\in\mathbb{R}^m$ and $\beta_2 > 0$.
From Lemma \ref{le:excessive_gap} we conclude that if the point $(\bar{x}, \bar{y})$ satisfies \eqref{eq:sec5_excessive_gap} then, for a given $y^{*}\in Y^{*}$, the following
estimates hold:
\begin{eqnarray}
& -2\beta_2\norm{y^{*}}^2 \leq -\norm{y^{*}}\norm{A\bar{x} - b} &\leq \phi(\bar{x}) - d(\bar{y}) \leq 0, \label{eq:sec5_dual_gap}\\
[-1.0ex]
\mathrm{and} && \nonumber\\
[-1.0ex]
& \norm{A\bar{x} - b} &\leq 2\beta_2\norm{y^{*}}\label{eq:sec5_feasible_gap}. 
\end{eqnarray}
We now adapt the dual scheme \eqref{eq:dual_step} to this special case. 
Suppose $(\bar{x},\bar{y})\in X\times\mathbb{R}^m$ satisfies \eqref{eq:sec5_excessive_gap}, we generate a new pair $(\bar{x}^{+},\bar{y}^{+})$ as
\begin{equation}\label{eq:sec5_dual_step}
(\bar{x}^{+},\bar{y}^{+}) := \mathcal{A}^d_s(\bar{x},\bar{y}; \beta_2, \tau) \Longleftrightarrow
\begin{cases}
&\hat{y} := (1-\tau)\bar{y} + \tau y^{*}(\bar{x}; \beta_2),\\
&\bar{x}^{+} := (1-\tau)\bar{x} + \tau x^{*}(\hat{y}),\\
&\bar{y}^{+} = \frac{1}{L^{\phi}}(Ax^{*}(\hat{y}) - b) + \hat{y},
\end{cases}
\end{equation}
where $y^{*}(\bar{x}; \beta_2) = \frac{1}{\beta_2}(A\bar{x} - b)$, and $x^{*}(y) := (x^{*}_{1}(y), x_{2}^{*}(y))$ is the solution of the minimization problem in
\eqref{eq:dual_func}. The parameter $\beta_2$ is updated by $\beta^{+}_2 := (1-\tau)\beta_2$ and $\tau \in (0, 1)$ will appropriately be chosen.

The following lemma shows that $(\bar{x}^{+}, \bar{y}^{+})$ generated by \eqref{eq:sec5_dual_step} satisfies \eqref{eq:sec5_excessive_gap} whose proof can be found in \cite{Nesterov2005}.

\begin{lemma}\label{le:sec5_main_rule}
Suppose that the point $(\bar{x},\bar{y})\in X\times\mathbb{R}^m$ satisfies the excessive gap condition \eqref{eq:sec5_excessive_gap} with the value $\beta_2$.
Then the new point $(\bar{x}^{+},\bar{y}^{+})$ computed by \eqref{eq:sec5_dual_step} is in $X\times\mathbb{R}^m$ and also satisfies
\eqref{eq:sec5_excessive_gap} with a new parameter value $\beta_2^{+}$ provided that 
\begin{equation}\label{eq:sec5_main_cond}
\beta_2 \geq \frac{\tau^2L_{\phi}}{1-\tau}. 
\end{equation}
\end{lemma}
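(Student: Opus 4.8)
The plan is to mirror the primal argument of Theorem~\ref{th:main_rule}, but now exploiting that in the strongly convex case the dual function $d$ in \eqref{eq:dual_func} is \emph{itself} smooth (Lipschitz gradient $L^{\phi}$ by \eqref{eq:sec5_dphi}), so only the primal side still carries the smoothing $f(\cdot;\beta_2)$ from \eqref{eq:f_beta2}. Membership $(\bar{x}^{+},\bar{y}^{+})\in X\times\mathbb{R}^m$ is immediate: $\bar{y}^{+}\in\mathbb{R}^m$ trivially, and $\bar{x}^{+}=(1-\tau)\bar{x}+\tau x^{*}(\hat{y})$ is a convex combination of points of the convex set $X$ (here $x^{*}(\hat{y})$ is well-defined and unique because each $\phi_i$ is strongly convex). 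Throughout I would write $x^{*}:=x^{*}(\hat{y})$, $u:=A\bar{x}-b$ and $v:=Ax^{*}-b=\nabla d(\hat{y})$ to keep the algebra compact.

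First I would control the dual increment. Since $d$ is concave with $L^{\phi}$-Lipschitz gradient, the standard descent estimate (the convex descent lemma applied to $-d$), evaluated at the exact gradient step $\bar{y}^{+}=\hat{y}+\frac{1}{L^{\phi}}\nabla d(\hat{y})$ from \eqref{eq:sec5_dual_step}, yields
\begin{equation*}
d(\bar{y}^{+}) \geq d(\hat{y}) + \nabla d(\hat{y})^T(\bar{y}^{+}-\hat{y}) - \tfrac{L^{\phi}}{2}\norm{\bar{y}^{+}-\hat{y}}^2 = d(\hat{y}) + \tfrac{1}{2L^{\phi}}\norm{v}^2 .
\end{equation*}

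Next I would lower bound $d(\hat{y})$. Using $d(\hat{y})=\phi(x^{*})+(Ax^{*}-b)^T\hat{y}$ together with the convex splitting $\hat{y}=(1-\tau)\bar{y}+\tau y^{*}(\bar{x};\beta_2)$ from the first line of \eqref{eq:sec5_dual_step}, with $y^{*}(\bar{x};\beta_2)=\frac{1}{\beta_2}u$, I would separate $d(\hat{y})$ into $(1-\tau)\bigl[\phi(x^{*})+(Ax^{*}-b)^T\bar{y}\bigr]+\tau\bigl[\phi(x^{*})+\frac{1}{\beta_2}v^Tu\bigr]$. The first bracket is $\geq d(\bar{y})$ by the definition of $d(\bar{y})$ as a minimum, and then $d(\bar{y})\geq f(\bar{x};\beta_2)=\phi(\bar{x})+\frac{1}{2\beta_2}\norm{u}^2$ by the excessive gap hypothesis \eqref{eq:sec5_excessive_gap}. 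Merging the objective terms with convexity of $\phi$, namely $(1-\tau)\phi(\bar{x})+\tau\phi(x^{*})\geq\phi(\bar{x}^{+})$, and combining with the dual-increment bound above, I obtain
\begin{equation*}
d(\bar{y}^{+}) \geq \phi(\bar{x}^{+}) + \tfrac{1-\tau}{2\beta_2}\norm{u}^2 + \tfrac{\tau}{\beta_2}\,v^Tu + \tfrac{1}{2L^{\phi}}\norm{v}^2 .
\end{equation*}

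Finally I would compare against $f(\bar{x}^{+};\beta_2^{+})=\phi(\bar{x}^{+})+\frac{1}{2(1-\tau)\beta_2}\norm{(1-\tau)u+\tau v}^2$, using $A\bar{x}^{+}-b=(1-\tau)u+\tau v$ and $\beta_2^{+}=(1-\tau)\beta_2$. Expanding the square on the right, the $\phi(\bar{x}^{+})$ term, the coefficient $\frac{1-\tau}{2\beta_2}$ of $\norm{u}^2$, and the coefficient $\frac{\tau}{\beta_2}$ of the cross term $v^Tu$ match exactly on both sides and cancel, leaving only the reduced inequality $\frac{1}{2L^{\phi}}\norm{v}^2\geq\frac{\tau^2}{2(1-\tau)\beta_2}\norm{v}^2$. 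This holds for every $v$ precisely when $(1-\tau)\beta_2\geq\tau^2 L^{\phi}$, i.e. condition \eqref{eq:sec5_main_cond}, which gives $d(\bar{y}^{+})\geq f(\bar{x}^{+};\beta_2^{+})$ and closes the proof. The only delicate point is the bookkeeping that produces the \emph{exact} cancellation of the $\norm{u}^2$ and $v^Tu$ contributions; everything else is a direct application of the descent lemma and of convexity. This is the dual-side analogue of \cite[Theorem~4.2]{Nesterov2005}.
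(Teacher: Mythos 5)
Your proof is correct. The paper does not actually print a proof of this lemma --- it defers to \cite{Nesterov2005} --- and your argument is precisely the standard dual-side excessive-gap computation that that reference contains: the descent estimate for the gradient step giving $d(\bar{y}^{+})\geq d(\hat{y})+\frac{1}{2L^{\phi}}\norm{v}^2$, the convex split of $d(\hat{y})$ along $\hat{y}=(1-\tau)\bar{y}+\tau y^{*}(\bar{x};\beta_2)$, the excessive-gap hypothesis to replace $d(\bar{y})$ by $f(\bar{x};\beta_2)$, and the exact cancellation of the $\norm{u}^2$ and $u^Tv$ terms against the expansion of $\norm{(1-\tau)u+\tau v}^2/(2\beta_2^{+})$, leaving exactly the condition $(1-\tau)\beta_2\geq\tau^2L^{\phi}$. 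All steps check out, so your write-up in fact supplies the proof the paper omits.
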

Now, let us derive the rule to update the parameter $\tau$. Suppose that $\beta_2$ satisfies \eqref{eq:sec5_main_cond}. Since $\beta^{+}_2 = (1-\tau)\beta_2$, the condition \eqref{eq:sec5_main_cond}
holds for $\beta_2^{+}$ if $\tau^2 \geq \frac{\tau_{+}^2}{1-\tau_{+}}$. Therefore, similar to Algorithm \ref{alg:A2}, we update the parameter $\tau$ by using the rule \eqref{eq:tau_sequence}. The conclusion
of Lemma \ref{le:update_tau} still holds for this case. 

Before presenting the algorithm, it is necessary to find a starting point $(\bar{x}^0, \bar{y}^0)$ which satisfies \eqref{eq:sec5_excessive_gap}. Let $y^c = 0\in\mathbb{R}^m$ and $\beta_2 = L^{\phi}$. We compute $(\bar{x}^0$, $\bar{y}^0)$ as
\begin{equation}\label{eq:sec5_init_point}
\bar{x}^0 := x^{*}(y^c) ~~\textrm{and}~~ \bar{y}^0 := \frac{1}{L^{\phi}}(A\bar{x}^0 - b) + y^c.
\end{equation}
It follows from Lemma 7.4 \cite{Nesterov2005} that $(\bar{x}^0, \bar{y}^0)$ satisfies the excessive gap condition \eqref{eq:sec5_excessive_gap}.

Finally, the decomposition algorithm for solving the strongly convex programming problem of the form \eqref{eq:distNLP} is described in detail as follows:

\vskip0.3cm
\noindent\rule[1pt]{\textwidth}{1.0pt}{~~}
\begin{algorithm}\vskip -0.2cm\label{alg:A3}{~}
(\textit{Decomposition algorithm for strongly convex objective function})
\end{algorithm}
\vskip -0.3cm
\noindent\rule[1pt]{\textwidth}{0.5pt}
\noindent\textbf{Initialization: }
\begin{enumerate}
\item Choose $\tau_0 := 0.5$. Set $\beta_2^0 = \frac{\norm{A_{1}}^2}{\sigma_1} + \frac{\norm{A_{2}}^2}{\sigma_2}$.
\item Compute $\bar{x}^0$ and $\bar{y}^0$ as:
\begin{equation*}
\bar{x}^0 := x^{*}(y^c) ~\textrm{and}~ \bar{y}^0 := \frac{1}{L^{\phi}}(A\bar{x}^0 - b) + y^c.
\end{equation*}
\end{enumerate}
\noindent\textbf{Iteration: }\texttt{For} $k=0,1,\cdots$ \texttt{do}
\begin{enumerate}
\item If a given stopping criterion is satisfied then terminate.
\item Compute $(\bar{x}^{k+1},\bar{y}^{k+1})$ using scheme \eqref{eq:sec5_dual_step}:
\begin{equation*}\label{eq:sec5_main_step}
(\bar{x}^{k+1}, \bar{y}^{k+1}) := \mathcal{A}^d_s(\bar{x}^k, \bar{y}^k; \beta_2^k, \tau_k). 
\end{equation*}
\item Update the smoothness parameter as: $\beta_2^{k+1} := (1-\tau_k)\beta_2^k$. 
\item Update the step size parameter $\tau_k$ as: $\tau_{k+1} := \frac{\tau_k}{2}\left[\sqrt{\tau^2_k+4}-\tau_k\right]$. 
\end{enumerate}
\texttt{End of For}.
\vskip-0.2cm
\noindent\rule[1pt]{\textwidth}{1.0pt}
The convergence and the worst-case complexity of Algorithm \ref{alg:A3} are stated as in Theorem \ref{th:convergence3} below.

\begin{theorem}\label{th:convergence3}
Let $\{(\bar{x}^k,\bar{y}^k)\}_{k\geq 0}$ be a sequence generated by Algorithm \ref{alg:A3}. Then the following duality and feasibility gaps are satisfied:
\begin{eqnarray}
& -\frac{8L^{\phi}\norm{y^{*}}^2}{(k+4)^2} &\leq \phi(\bar{x}^k) - d(\bar{y}^k) \leq 0, \label{eq:sec5_dual_gap2}\\
[-1.5ex]
\textrm{and}~ && \nonumber\\
[-1.5ex]
& \norm{A\bar{x}^k - b} &\leq \frac{8L^{\phi}\norm{y^{*}}}{(k+4)^2}, \label{eq:sec5_feasible_gap2}
\end{eqnarray}
where $L^{\phi} := \frac{\norm{A_{1}}^2}{\sigma_1} + \frac{\norm{A_{2}}^2}{\sigma_2}$.
\end{theorem}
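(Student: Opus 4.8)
The plan is to reduce the two claimed estimates to a single quantitative bound on the smoothness parameter $\beta_2^k$, and then to track how fast $\beta_2^k$ decays under the update rules of Algorithm \ref{alg:A3}. First I would show, by induction on $k$, that every iterate $(\bar{x}^k,\bar{y}^k)$ satisfies the excessive gap condition \eqref{eq:sec5_excessive_gap} with its current parameter $\beta_2^k$. The base case is the starting point \eqref{eq:sec5_init_point}, which satisfies \eqref{eq:sec5_excessive_gap} by the cited Lemma 7.4 of \cite{Nesterov2005}. For the inductive step I would appeal to Lemma \ref{le:sec5_main_rule}: the scheme $\mathcal{A}^d_s$ preserves \eqref{eq:sec5_excessive_gap} as long as the compatibility condition \eqref{eq:sec5_main_cond}, namely $\beta_2^k \geq \tau_k^2 L^{\phi}/(1-\tau_k)$, holds at each iteration. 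So the first subtask is to verify that the update $\tau_{k+1} := \frac{\tau_k}{2}[\sqrt{\tau_k^2+4}-\tau_k]$ together with $\beta_2^{k+1} := (1-\tau_k)\beta_2^k$ and the initialization $\tau_0 = 0.5$, $\beta_2^0 = L^{\phi}$ keeps \eqref{eq:sec5_main_cond} valid; this is exactly the derivation preceding the algorithm, where the chosen $\tau$-rule is precisely the tightest one ensuring that \eqref{eq:sec5_main_cond} propagates from step $k$ to step $k+1$.

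Once the excessive gap condition is available at every iterate, I would invoke the specialized estimates \eqref{eq:sec5_dual_gap}--\eqref{eq:sec5_feasible_gap}, which are the $\beta_1 = 0$ instances of Lemma \ref{le:excessive_gap}. These immediately give $-2\beta_2^k\norm{y^{*}}^2 \leq \phi(\bar{x}^k) - d(\bar{y}^k) \leq 0$ and $\norm{A\bar{x}^k - b} \leq 2\beta_2^k\norm{y^{*}}$ for any $y^{*}\in Y^{*}$. Thus both claimed inequalities follow at once from a bound of the form $\beta_2^k \leq c\,L^{\phi}/(k+4)^2$; all that remains is to produce such a bound.

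The decisive step is estimating $\beta_2^k$. From the explicit $\tau$-rule one has the algebraic identity $\tau_{k+1}^2 = (1-\tau_{k+1})\tau_k^2$, equivalently $1 - \tau_i = \tau_i^2/\tau_{i-1}^2$ for $i \geq 1$. I would use this to telescope the product $\beta_2^k = \beta_2^0\prod_{i=0}^{k-1}(1-\tau_i)$ into a closed form proportional to $\tau_{k-1}^2$ (indeed $\beta_2^k/\tau_{k-1}^2$ is a conserved quantity along the iterations). It then suffices to show $\tau_k = O(1/k)$. For that I would pass to $a_k := 1/\tau_k$, for which the recursion becomes the additive relation $a_{k+1}^2 - a_{k+1} = a_k^2$; writing $a_{k+1}^2 - a_k^2 = a_{k+1}$ and factoring shows $a_{k+1} - a_k = a_{k+1}/(a_{k+1}+a_k) > \tfrac12$ (using the monotonicity $a_{k+1} > a_k$), whence $a_k > a_0 + k/2 = (k+4)/2$ since $a_0 = 2$. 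This yields $\tau_k < 2/(k+4)$ and, after substitution, the $1/(k+4)^2$-type decay of $\beta_2^k$.

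I expect the last paragraph to be the main obstacle, for two reasons. The telescoping relies on correctly exploiting the implicit definition of the $\tau$-sequence, and the sharp $O(1/k)$ bound on $\tau_k$ is only obtained after recasting the nonlinear recursion as the additive one for $a_k = 1/\tau_k$; this is precisely the step from which the $O(1/k^2)$ acceleration (as opposed to the $O(1/k)$ rate of Algorithms \ref{alg:A1} and \ref{alg:A2}) ultimately originates, since here $\beta_2^k$ is \emph{quadratic} in $\tau_{k-1}$ while each gap is only linear in $\beta_2^k$. Care must also be taken to match the precise numerical constants in \eqref{eq:sec5_dual_gap2}--\eqref{eq:sec5_feasible_gap2}, which hinge on the exact initialization $\tau_0 = 0.5$ and $\beta_2^0 = L^{\phi}$ feeding through the conserved ratio $\beta_2^k/\tau_{k-1}^2$.
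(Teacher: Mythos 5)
Your proposal is correct and follows essentially the same route as the paper: the paper likewise telescopes $\beta_2^{k+1}=\beta_2^0\prod_{i=0}^{k}(1-\tau_i)=\frac{\beta_2^0(1-\tau_0)}{\tau_0^2}\tau_k^2$ using the identity $(1-\tau_{k+1})=\tau_{k+1}^2/\tau_k^2$, bounds $\tau_k<\frac{2\tau_0}{2+\tau_0 k}$ (which is exactly the estimate \eqref{eq:lm42_est1} that your additive recursion $a_{k+1}^2-a_{k+1}=a_k^2$ for $a_k=1/\tau_k$ reproduces), and then substitutes into \eqref{eq:sec5_dual_gap}--\eqref{eq:sec5_feasible_gap}. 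The only cosmetic difference is that the paper cites \eqref{eq:lm42_est1} from the appendix rather than rederiving the $O(1/k)$ decay of $\tau_k$ in place.
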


\begin{proof}
From the update rule of $\tau^k$, we have $(1-\tau_{k+1}) = \frac{\tau_{k+1}^2}{\tau_k^2}$. Moreover, since $\beta_2^{k+1} = (1-\tau_k)\beta_2^k$, it implies that $\beta_2^{k+1} =
\beta_2^0\prod_{i=0}^{k}(1-\tau_i) = \frac{\beta_2^0(1-\tau_0)}{\tau_0^2}\tau_k^2$. By using the inequalities \eqref{eq:lm42_est1} and $\beta_2^0 = L_{\phi}$,
we have $\beta_2^{k+1} < \frac{4L_{\phi}(1-\tau_0)}{(\tau_0k + 2)^2}$. With $\tau_0 = 0.5$, one has $\beta_2^k < \frac{8L_{\phi}}{(k+4)^2}$. By substituting
this inequality into \eqref{eq:sec5_dual_gap} and \eqref{eq:sec5_feasible_gap}, we obtain \eqref{eq:sec5_dual_gap2} and \eqref{eq:sec5_feasible_gap2},
respectively.
\Eproof
\end{proof}
Theorem \ref{th:convergence3} shows that the worst-case complexity of Algorithm \ref{alg:A3} is $O(\frac{1}{\sqrt{\varepsilon}})$. Moreover, at each iteration
of this algorithm, only two convex problems need to be solved \textit{in parallel}.
 
\section{Discussion on implementation and comparison}\label{sec:discussion}
\subsection{The choice of prox-functions and the Bregman distance}
Algorithms \ref{alg:A1} and \ref{alg:A2} require to build a prox-function for each feasible set $X_{i}$ for $i=1,2$. 
For a nonempty, closed and bounded convex set $X_{i}$, the simplest prox-function is $p_i(x_{i}) := \frac{\rho_i}{2}\norm{x_{i} -\bar{x}_{i}}^2$, for a given
$\bar{x}_{i}\in X_{i}$ and $\rho_i > 0$. This function is strongly convex with the parameter $\sigma_i = \rho_i$ and the prox-center is $\bar{x}_{i}$,
($i=1,2$). 
In implementation, it is worth to investigate the structure of the feasible set $X_{i}$ in order to choose an appropriate prox-function and its scaling factor
$\rho_i$ for each feasible subset $X_{i}$ ($i=1,2$).

In \eqref{eq:prox_mapping_xy}, we have used the Euclidean distance to construct the proximal terms. It is possible to use a generalized Bregman distance in
these problems which is compatible to the prox-function $p_i$ and the feasible subset $X_i$ ($i=1,2$).
Moreover, a proper choice of the norms in the implementation may lead to a better performance of the algorithms, see \cite{Nesterov2005} for more details.

\subsection{Extension to a multi-component separable objective function}
The algorithms developed in the previous sections can be directly applied to solve problem \eqref{eq:separable_convex_problem} in the case $M > 2$.
First, we provide the following formulas to compute the parameters of Algorithms \ref{alg:A1}-\ref{alg:A3}.
\begin{enumerate}
\item The constant $\bar{L}$ in Theorems \ref{th:convergence} and \ref{th:convergence2} is replaced by $\bar{L}_M = M\displaystyle\max_{1\leq i\leq M}\left\{\frac{\norm{A_{i}}^2}{\sigma_i}\right\}$.
\item The initial values of $\beta_1^0$ and $\beta_2^0$ in Algorithms \ref{alg:A2} and \ref{alg:A3} are $\beta_1^0 = \beta_2^0 = \sqrt{\bar{L}_M}$.
\item The Lipschitz constant $L_i^{\psi}(\beta_2)$ in Lemma \ref{le:Lipschitz_diff_psi} is $L_i^{\psi}(\beta_2) = \frac{M\norm{A_{i}}^2}{\beta_2}$ ($i=1,\dots, M$).
\item The Lipschitz constant $L_{d}(\beta_1)$ in Lemma \ref{le:smoothing_estimate} is $L_{d}(\beta_1) := \frac{1}{\beta_1}\displaystyle\sum_{i=1}^M\frac{\norm{A_{i}}^2}{\sigma_{i}}$.
\item The Lipschitz constant $L_{\phi}$ in Algorithm \ref{alg:A3} is $L^{\phi} := \displaystyle\sum_{i=1}^M\frac{\norm{A_{i}}^2}{\sigma_{i}}$. 
\end{enumerate}
Note that these constants depend linearly on $M$ and the structure of matrix $A_{i}$ ($i=1,\dots, M$). 

Next, we rewrite the smoothed dual function $d(y;\beta_1)$ defined by \eqref{eq:d_lbd} for the case $M > 2$ as follows:
\begin{eqnarray*}
d(y;\beta_1) = \sum_{i=1}^Md_i(y; \beta_1),
\end{eqnarray*}
where $M$ function values $d_i(y;\beta_1)$ can be computed in parallel as:
\begin{eqnarray*}
d_i(y;\beta_1) = -\frac{1}{M}b_i^Ty + \min_{x_i\in X_i}\left\{ \phi_i(x_i) + y^TA_ix_i + \beta_1p_i(x_i)\right\}.
\end{eqnarray*}
Note that the term $-\frac{1}{M}b_i^Ty$ is also computed locally for each component subproblem instead of computing separately as in \eqref{eq:d_lbd}.
The quantities $\hat{y}$ and $y^{+} := G(\hat{y};\beta_1)$ defined in \eqref{eq:primal_step} and \eqref{eq:dual_step} can respectively be expressed as:
\begin{align}
& \hat{y} := (1-\tau)\bar{y} + (1-\tau)\sum_{i=1}^M\frac{1}{\beta_2}(A_i\bar{x}_i - \frac{1}{M}b), \nonumber\\
\textrm{and}~ &y^{+} := \hat{y} + \sum_{i=1}^M\left[\frac{1}{L^d(\beta_1)}(A_ix_i^{*}(\hat{y};\beta_1) - \frac{1}{M}b) \right].\nonumber
\end{align}
These formulas show that each component of $\hat{y}$ and $y^{+}$ can be computed by only using the local information and its neighborhood information.
Therefore, both algorithms are highly distributed.

Finally, we note that if there exists a component $\phi_i$ of the objective function $\phi$ which is Lipschitz continuously differentiable then the gradient
projection mapping $G_i(\hat{x};\beta_2)$ defined by \eqref{eq:G_XY} corresponding to the primal convex subproblem of this component can be used instead of the 
proximity mapping $P_i(\hat{x};\beta_2)$ defined by \eqref{eq:prox_mapping_xy}. This modification can reduce the computational cost of the
algorithms.
Note that the sequence $\{\tau_k\}_{k\geq 0}$ generated by the rule \eqref{eq:tau_sequence} still maintains the
condition \eqref{eq:main_condition_diff} in Remark \ref{re:Lipschit_diff_phi}.

\subsection{Stopping criterion}
In practice, we do not often encounter a problem which reaches the worst-case complexity bound. Therefore, it is necessary to provide a stopping criterion for the implementation of Algorithms
\ref{alg:A1}, \ref{alg:A2} and \ref{alg:A3} to terminate earlier than using the worst-case bound.
In principle, we can use the KKT condition to terminate the algorithms. However, evaluating the global KKT tolerance in a distributed manner is
impractical.

From Theorems \ref{th:convergence} and \ref{th:convergence2} we see that the upper bound of the duality and feasibility gaps do not only depend on the
iteration counter $k$
but also on the constants $\bar{L}$, $D_i$ and $y^{*}\in Y^{*}$. The constant $\bar{L}$ can be explicitly computed based on matrix $A$ and the choice of the
prox-functions. 
We now discuss on the evaluations of $D_i$ and $y^{*}$ in the case $X_i$ is unbounded.
Let sequence $\{(\bar{x}^k,\bar{y}^k)\}$ be generated by Algorithm \ref{alg:A1} (or Algorithm \ref{alg:A2}). Suppose that $\{(\bar{x}^k, \bar{y}^k)\}$ converges
to $(x^{*}, y^{*})\in X^{*}\times Y^{*}$. Thus, for $k$ sufficiently large, the sequence $\{(\bar{x}^k,\bar{y}^k)\}$ is contained in a neighborhood of
$X^{*}\times Y^{*}$. 
Given $\omega > 0$, let us define
\begin{equation}\label{eq:hat_DXDYDL}
\hat{D}_i^k := \max_{0\leq j\leq k}p_i(\bar{x}^j_{i}) + \omega ~\textrm{and}~ \hat{y}^k := \max_{0\leq j\leq k}\norm{\bar{y}^j} +
\omega.
\end{equation}
We can use these constants to construct a stopping criterion in Algorithms \ref{alg:A1} and \ref{alg:A2}. 
More precisely, for a given tolerance $\varepsilon > 0$, we compute 
\begin{equation}\label{eq:terminate_alg}
e_d := \beta_1^k(\hat{D}^k_1+\hat{D}^k_2), ~\textrm{and}~ 
e_p := \beta_2^k\left[\hat{y}^k+\sqrt{(\hat{y}^k)^2 + 2(\hat{D}^k_1+\hat{D}_2^k)}\right],  
\end{equation}
at each iteration. We terminate Algorithm \ref{alg:A1} if $e_d \leq \varepsilon$ and $e_p \leq \varepsilon$. A similar strategy can also be applied to Algorithms \ref{alg:A2} and \ref{alg:A3}.

\subsection{Comparison.}
Firstly, we compare Algorithms \ref{alg:A1} and \ref{alg:A2}.
From Lemma \ref{le:excessive_gap} and the proof of Theorems \ref{th:convergence} and \ref{th:convergence2} we see that the rate of convergence of both
algorithms is as same as of $\beta_1^k$ and $\beta_2^k$.
At each iteration, Algorithm \ref{alg:A1} updates simultaneously $\beta_1^k$ and $\beta_2^k$ by using the same value of $\tau_k$, while Algorithm \ref{alg:A2}
updates only one parameter.
Therefore, to update both parameters $\beta_1^k$ and $\beta_2^k$, Algorithm \ref{alg:A2} needs two iterations. 
We analyze the update rule of $\tau_k$ in Algorithms \ref{alg:A1} and \ref{alg:A2} to compare the rate of convergence of both algorithms.
 
Let us define
\begin{equation*}
\xi_1(\tau) := \frac{\tau}{\tau + 1} ~\mathrm{and}~ \xi_2(\tau) := \frac{\tau}{2}\left[\sqrt{\tau^2 + 4} - \tau\right].
\end{equation*}
The function $\xi_2$ can be rewritten as $\xi_2(\tau) = \frac{\tau}{\sqrt{(\tau/2)^2 + 1} + \tau/2}$.
Therefore, we can easily show that:
\begin{equation*}
\xi_1(\tau) < \xi_2(\tau) < 2\xi_1(\tau).
\end{equation*}
If we denote by $\{\tau_k^{\mathrm{A}_1}\}_{k\geq 0}$ and $\{\tau_k^{\mathrm{A}_2}\}_{k\geq 0}$ the two sequences generated by Algorithms \ref{alg:A1} and
\ref{alg:A2}, respectively then we have $\tau_k^{\mathrm{A}_1} < \tau_k^{\mathrm{A}_2} < 2\tau_k^{\mathrm{A}_1}$ for all $k$ provided that
$2\tau_0^{\mathrm{A}_1} \geq \tau_0^{\mathrm{A}_2}$.
Since Algorithm \ref{alg:A1} updates $\beta_1^{k}$ and $\beta_2^k$ simultaneously while Algorithm \ref{alg:A2} updates each of them at each iteration. 
If we choose $\tau_0^{\mathrm{A}_1} = 0.499$ and $\tau_0^{\mathrm{A}_2} = 0.998$ in Algorithms \ref{alg:A1} and \ref{alg:A2}, respectively, then, by directly
computing the value of $\tau^{\mathrm{A}_1}_k$ and $\tau^{\mathrm{A}_2}_k$, we can see that $2\tau^{\mathrm{A}_1}_k > 2\tau^{\mathrm{A}_2}_k$ for all $k\geq 1$.
Consequently, the sequences $\{\beta_1^k\}$ and $\{\beta_2^k\}$ in Algorithm \ref{alg:A1} converge to zero faster than in Algorithm \ref{alg:A2}. In other
words, Algorithm \ref{alg:A1} is faster than Algorithm \ref{alg:A2}. 

Now, we compare Algorithm \ref{alg:A1}, Algorithm \ref{alg:A2} and Algorithm 3.2. in \cite{Necoara2008} (see also \cite{Tsiaflakis2010}). 
Note that the smoothness parameter $\beta_1$ which is also denoted by $c$ is fixed in Algorithm 3.2 of \cite{Necoara2008}. Moreover, this parameter is
proportional to the given desired accuracy $\varepsilon$, which is often very small. Thus, the Lipschitz
constant $L^{d}(\beta_1)$ is very large. Consequently, Algorithm 3.2. of \cite{Necoara2008} makes a slow progress at the very early iterations. In Algorithms \ref{alg:A1} and \ref{alg:A2}, the parameters
$\beta_1$ and $\beta_2$ are dynamically updated starting from given values. 
Besides, the cost per iteration of Algorithm 3.2 \cite{Necoara2008} is more expensive than Algorithms \ref{alg:A1} and \ref{alg:A2} since it requires to solve two convex problem pairs in parallel and
two dual steps. 

\section{Numerical Tests}\label{sec:num_results}
In this section, we verify the performance of the proposed algorithms by applying them to solve the following separable convex optimization problem:
\begin{equation}\label{eq:scp_prob}
\left\{\begin{array}{cl}
\displaystyle\min_{x = (x_1,\dots, x_M)} &\Big\{ \phi(x) := \displaystyle\sum_{i=1}^M\phi_i(x_i) \Big\},\\
\textrm{s.t.} &\displaystyle\sum_{i=1}^Mx_i \leq (=) b,\\
& l_i \leq x_i \leq u_i, ~ i=0,\dots, M,
\end{array}\right.
\end{equation}
where $\phi_i : \mathrm{R}^{n_x}\to \mathrm{R}$ is convex, $b$, $l_i$ and $u_i\in\mathbb{R}^{n_x}$ are given for $i=1,\dots, M$.
The problem \eqref{eq:scp_prob} arises in many applications including resource allocation problems \cite{Ibaraki1980} and DSL dynamic spectrum management
problems \cite{Tsiaflakis2010}. 
In the case of inequality coupling constraints, we can bring the problem \eqref{eq:scp_prob} in to the form of  \eqref{eq:separable_convex_problem} by adding a
slack
variable $x_{M+1}$ as a new component.

\subsection{Implementation details}
We implement Algorithms \ref{alg:A1} and \ref{alg:A2} proposed in the previous sections to solve \eqref{eq:scp_prob}. The implementation is carried out in C++
running on a $16$ cores workstation Intel\textregistered Xeron $2.7$GHz and $12$ GB of RAM.
To solve general convex programming subproblems, we implement a primal-dual predictor-corrector interior point method. All the algorithms are parallelized by
using \texttt{OpenMP}.

The prox-functions $d_i(x_{i}) := \frac{\rho}{2}\norm{x_{i} - x_{i}^c}^2$ are used, where $x_{i}^c$ is the center of the box $X_{i} := [l_i, u_i]$ and $\rho :=
1$ for all $i=1,\dots, M$. 
We terminate Algorithms \ref{alg:A1} and \ref{alg:A2} if $\texttt{rpfgap} := \norm{Ax^k - b}_2/\norm{b}_2 \leq \varepsilon_{\mathrm{p}}$ and
either 
$\texttt{rdfgap} := \max\left\{ 0, \beta_1^k\sum_{i=1}^MD_{X_i} - \frac{1}{2\beta_2}\norm{Ax^k-b}^2 \right\} \leq \varepsilon_{\mathrm{d}}(\abs{\phi(x^k)} + 1)$
or the value of the objective function does not significantly change in $3$ successive iterations, i.e. $\abs{\phi(\bar{x}^{k}) -
\phi(\bar{x}^{k-j})}/\max\{1.0, \abs{\phi(\bar{x}^k)}\}
\leq \varepsilon_{\phi}$ for $j=1, 2, 3$, where $\varepsilon_{\mathrm{p}} = 10^{-2}$, $\varepsilon_{\mathrm{d}} = 10^{-1}$ and
$\varepsilon_{\phi} = 10^{-5}$ are given tolerances.
Note that the quantity \texttt{rdfgap} is computed in the worst-case complexity, see Lemma \ref{le:excessive_gap}.

To compare the performance of the algorithms, we also implement the proximal-center-based decomposition algorithm proposed in \cite[Algorithm
3.2.]{Necoara2008} and an exact variant of the proximal-based decomposition in \cite[Algorithm I]{Chen1994} for solving \eqref{eq:scp_prob} which we name
\texttt{PCBD} and
\texttt{EPBD},
respectively. The prox-function of the dual problem is chosen as $d_Y(y) := \frac{\rho}{2}\norm{y}^2$ with $\rho := 1.0$ and the smoothness parameter $c$ of
\texttt{PCBD} is
set to $c := \frac{\varepsilon_{\mathrm{p}}}{\sum_{i=1}^MD_{X_i}}$, where $D_{X_i}$ is defined by \eqref{eq:DxDy}. 
We terminate \texttt{PCBD} if the relative feasibility gap $\texttt{rpfgap}\leq\varepsilon_p$ and either the objective value reaches the one reported by
Algorithm \ref{alg:A1} or the maximum number of iterations $\texttt{maxiter} = 10,000$ is reached.
 
\subsection{Numerical results and comparison}
We test the above algorithms for three examples. The two first examples are resource allocation problems and the last one is a DSL dynamic spectrum management
problem. The first example was considered in \cite{Johansson2009}, while the problem formulation and the data of the third example are obtained from
\cite{Tsiaflakis2010}.

\vskip0.2cm
\noindent\textit{7.2.1. Resource allocation problems. }
Let us consider a resource allocation problem in the form of \eqref{eq:scp_prob} where the coupling constraint $\sum_{i=1}^Mx_i = b$ is tackled.

\vskip0.12cm
\noindent\textit{$\mathrm{(a)}$ Nonsmooth convex optimization problems. }
In the first numerical example, we choose $n_x = 1$, $M = 5$, the objective function $\phi_i(x_i) := i\abs{x_i - i}$ which is nonsmooth and $b = 10$ as in
\cite{Johansson2009}. The lower bound $l_i$ is set to $l_i=-5$ and the upper bound $u_i$ is $u_i = 7$ for $i=1,\dots, M$. With these choices,
the optimal solution of this problem is $x^{*} = (-4, 2, 3, 4, 5)$. 

We use four different algorithms which consist of Algorithm \ref{alg:A1}, Algorithm \ref{alg:A2}, \texttt{PCBD} in \cite{Necoara2008} and \texttt{PCBD} in
\cite[Algorithm I]{Chen1994} to solved problem \eqref{eq:scp_prob}. The approximate solutions reported by these algorithms after $100$ iterations are $x^k =
(-3.978, 2, 3, 4, 5)$, ~$(-3.875, 1.983,\\ 2.990, 3.996, 5)$, ~$(-4.055, 2, 3, 4, 5)$ and $(-4.423, 2, 3, 4, 5)$, respectively. The corresponding objective
values are $\phi(x^k) =
4.978$,~$4.954$, $5.055$ and $5.423$, respectively.

The convergence behaviour of four algorithms is shown in Figure \ref{fig:error_exam1}, where the relative error of the objective function
$\mathrm{re}_{\phi} := \abs{\phi(x^k)-\phi^{*}}/\abs{\phi^{*}}$ is plotted on the left and the relative error of the solution $\mathrm{re}_x :=
\norm{x^k-x^{*}}/\norm{x^{*}}$ is on the right.
\begin{figure}[ht]
\vskip-0.5cm
\centerline{\includegraphics[angle=0,height=3.5cm,width=12.0cm]{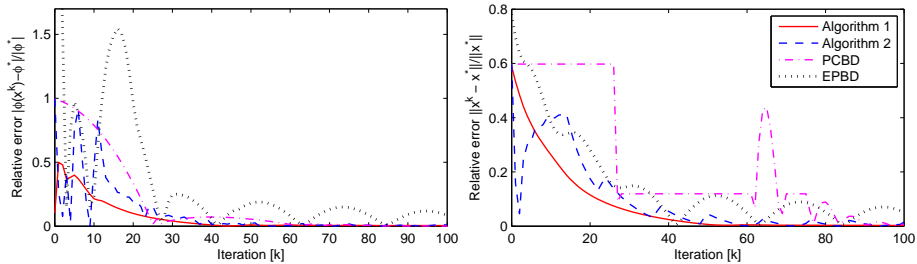}}
\caption{The relative error of the approximations to the optimal value (left) and to the optimal solution (right).}
\label{fig:error_exam1}
\vskip-0.4cm
\end{figure}
As we can see from these figures that the relative errors in Algorithm \ref{alg:A2}, \texttt{PCBD} and \texttt{EPBD} oscillate with respect to the
iteration counter while they are decreasing monotonously in Algorithm \ref{alg:A1}. 
The relative errors in Algorithms \ref{alg:A1} and \ref{alg:A2} are approaching zero earlier than the ones in \texttt{PCBD} and \texttt{EPBD}.
Note that in this example a nonmonotone variant of the \texttt{PCBD} algorithm \cite{Necoara2008,Tsiaflakis2010} is used.

\vskip0.12cm
\noindent\textit{$\mathrm{(b)}$ Nonlinear resource allocation problems.}
In order to compare the efficiency of Algorithm \ref{alg:A1}, Algorithm \ref{alg:A2} and \texttt{PCBD}, we build two performance profiles of these
algorithms in terms of total iterations and total computational time. 

In this case, the objective function $\phi_i$ is chosen as $\phi_i(x_i) = a_i^Tx_i - w_i\ln(1 + b_i^Tx_i)$, where the linear cost vector $a_i$, vector $b_i$
and the
weighting vector $w_i$ are generated randomly in the intervals $[0, 5]$, $[0, 10]$ and $[0, 5]$, respectively. The lower bound and the upper bound
are set to $l_i = (0, \dots, 0)^T$ and $u_i = (1,\dots, 1)^T$, respectively. Note that the objective function $\phi_i$ is linear if $w_i = 0$ and strictly
convex if $w_i > 0$.

We carry out three algorithms for solving a collection of $50$ random test problems with the size varying from  $M = 10$ to $M=5,000$ components, $m = 5$ to
$300$ coupling constraints and $n = 50$ to $500,000$ variables.
The performance profiles are plotted in Figure \ref{fig:perf_scp} which include the total number of iterations (left) and total computational time (right).
\begin{figure}[ht]
\vskip-0.5cm
\centerline{\includegraphics[angle=0,height=4.0cm,width=12.3cm]{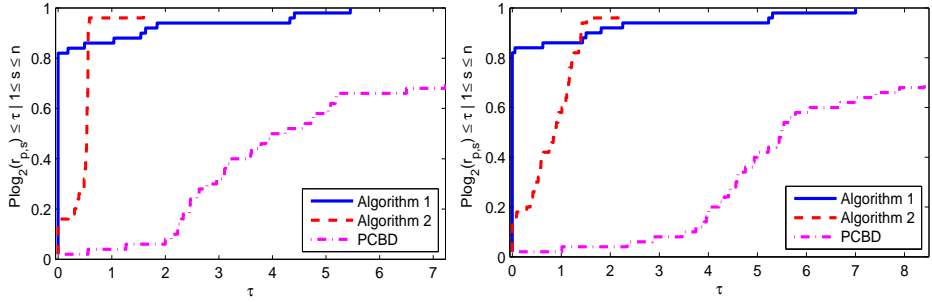}}
\caption{Performance profile of three algorithms in $\log_2$ scale: Left-Number of iterations, Right-CPU time.}
\label{fig:perf_scp}
\vskip-0.5cm
\end{figure}
The numerical test on this collection of problems shows that Algorithm \ref{alg:A1} solves all the problems and Algorithm \ref{alg:A2} solve $48/50$ problems,
i.e. $96\%$ of the collection. \texttt{PCBD} only solves $31/50$ problems, i.e. $62\%$ of the collection.
However, Algorithms \ref{alg:A1} is the most efficient. It solves up to more than $81\%$ problems with the best performance.
\texttt{PCBD} is rather slow and exceeds the maximum number of iterations in many of the test problems ($19$ problems). 
Moreover, it is rather sensitive to the smoothness parameter.

\vskip0.2cm
\noindent\textit{7.2.2. DSL dynamic spectrum management problem.}
In this example, we apply the proposed algorithms to solve a separable convex programming problem arising in DSL dynamic spectrum management.
This problem is a convex relaxation of the original DSL dynamic spectrum management formulation considered in \cite{Tsiaflakis2010}.

Since the formulation given in \cite{Tsiaflakis2010} has an inequality coupling constraint $\sum_{i=1}^Mx_i \leq b$, by adding a new slack variable $x_{M+1}$
such that $\sum_{i=1}^{M+1}x_i = b$ and $0\leq x_{M+1} \leq b$, we can transform this problem into \eqref{eq:separable_convex_problem}.
The objective function of the resulting problem becomes:
\begin{equation}\label{eq:f_i}
\phi_i(x_i) := \begin{cases} 
a_i^Tx_i - \sum_{j=1}^{n_i}c_i^j\ln\left( \sum_{k=1}^{n_i}h_i^{jk}x_i^k + g_i^k \right) &\mathrm{if}~ i=1,\dots, M,\\
0 &\mathrm{if}~ i = M+1.
\end{cases}
\end{equation}
Here, $a_i\in\mathbb{R}^{n_i}$, $c_i, ~g_i\in\mathbb{R}^{n_i}_{+}$ and $H_i := (h_i^{jk})\in\mathbb{R}^{n_i\times n_i}_{+}$, ($i=1,\dots, M$).
The function $\phi_i$ is convex (but not strongly convex) for all $i=1,\dots, M+1$.
As described in \cite{Tsiaflakis2010} that the variable $x_i$ is referred to as transmit power spectral density, $n_i = N$ for all $i=1,\dots, M$ is the
number of users, $M$ is the number of frequency tones which is usually large and $\phi_i$ is a convex approximation of a desired BER
function\footnote{\textbf{B}it \textbf{E}rror \textbf{R}ate function}, the coding gain
and noise margin. A detail model and parameter descriptions of this problem can be found in \cite{Tsiaflakis2010}.

We test three algorithms for the case of $M = 224$ tones and $N = 7$ users. The other parameters are selected as in \cite{Tsiaflakis2010}.
Algorithm \ref{alg:A1} requires $922$ iterations, Algorithm \ref{alg:A2} needs $1314$ iterations, while \texttt{PCBD} reaches the maximum number of
iterations $k_{\max} = 3000$. The relative feasibility gaps $\norm{Ax^k-b}/\norm{b}$ reported by the three algorithms are $9.955\times 10^{-4}$,
$9.998\times10^{-4}$ and $2.431\times10^{-2}$, respectively. 
The obtained approximate solutions of three algorithms and the optimal solution are plotted in Figure \ref{fig:dsl_frequency} which represent the transmit power
with respect to the frequency tones.
\begin{figure}[ht]
\vskip-0.5cm
\centerline{\includegraphics[angle=0,height=7.0cm,width=12cm]{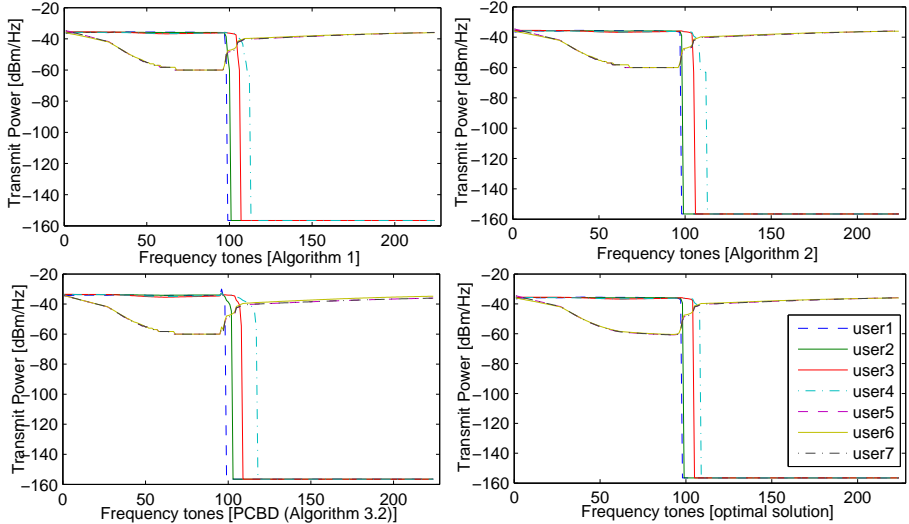}}
\caption{The approximate solutions of the DSL-dynamic spectrum management problem \eqref{eq:scp_prob} reported by three algorithms and the optimal solution.}
\label{fig:dsl_frequency}
\end{figure}
The relative errors of the approximation $x^k$ to the optimal solution $x^{*}$, $\texttt{err}_k := \norm{x^k - x^{*}}/\norm{x^{*}}$, are $0.00853$, $0.00528$
and $0.03264$, respectively. The corresponding objective values are $13264.68530$, $13259.67633$ and $13405.79722$, respectively, while the optimal value is
$13267.11919$.

Figure \ref{fig:dsl_frequency} shows that the solutions reported by three algorithms are consistently close to the optimal one. 
As claimed in \cite{Tsiaflakis2010}, \texttt{PCBD} works much better than subgradient methods. 
However, we can see from this application that Algorithms \ref{alg:A1} and \ref{alg:A2} require fewer iterations than \texttt{PCBD} to reach a relatively
similar approximate solution.

\section{Conclusions}
In this paper, two new algorithms for large scale separable convex optimization have been proposed. Their convergence has been proved and complexity bound has
been given. The main advantage of these algorithms is their ability to dynamically update the smoothness parameters. This allows the algorithms to control the
step-size of the search direction at each iteration. Consequently, they generate a larger step at the first iterations instead of remaining fixed for all
iterations as in the algorithm proposed in \cite{Necoara2008}. The convergence behavior and the performance of these algorithms
have been illustrated through numerical  examples. Although the global convergence rate is still sub-linear, the computational results are remarkable,
especially when the number of variables as well as the number of nodes increase. 
From a theoretical point of view, the algorithms possess a good performance behavior, due to their numerical robustness and reliability. Currently, the
numerical results are still preliminary, however we
believe that the theory presented in this paper is useful and may provide guidance for practitioners. Moreover, the steps of the algorithms are  rather simple
so they can easily be implemented in practice. Future research directions include the dual update scheme and extensions of the algorithms to inexact variants as
well as applications.

\vskip0.3cm
\begin{footnotesize}
\noindent\textbf{Acknowledgments.}
The authors would like to thank Dr. Ion Necoara and Dr. Michel Baes for useful comments on the text and for pointing
out some interesting references. 
Furthermore, the authors are grateful to Dr. Paschalis Tsiaflakis for providing the reality data in the second numerical example. 
Research supported by Research Council KUL: CoE EF/05/006 Optimization in Engineering(OPTEC), IOF-SCORES4CHEM, GOA/10/009 (MaNet), GOA /10/11, several
PhD/postdoc and fellow grants; Flemish Government: FWO: PhD / postdoc grants, projects G.0452.04, G.0499.04, G.0211.05, G.0226.06, G.0321.06, G.0302.07,
G.0320.08, G.0558.08, G.0557.08, G.0588.09, G.0377.09, G.0712.11, research communities (ICCoS, ANMMM, MLDM); IWT: PhD Grants, Belgian Federal Science Policy
Office: IUAP P6/04; EU: ERNSI; FP7-HDMPC, FP7-EMBOCON, ERC-HIGHWIND, Contract Research: AMINAL. Other: Helmholtz-viCERP, COMET-ACCM.
\end{footnotesize}

\appendix
\section*{A. The proofs of Technical Lemmas}
This appendix provides the proofs of two technical lemmas stated in the previous sections.

\vskip0.2cm
\noindent\textbf{A.1. The proof of Lemma \ref{le:intial_point}.} 
The proof of this lemma is very similar to Lemma 3 in \cite{Nesterov2005}.
\begin{proof}
Let $\hat{y} := y^{*}(\hat{x};\beta_2) := \frac{1}{\beta_2}(A\hat{x}-b)$. Then it follows from \eqref{eq:estimate_psi} that:
\begin{eqnarray}\label{eq:lemma24_est1}
\psi(x; \beta_2) \!\! && \overset{\tiny\eqref{eq:estimate_psi}}{\leq} \psi(\hat{x}; \beta_2) + \nabla_1\psi(\hat{x};\beta_2)^T(x_{1}-\hat{x}_{1})  +
\nabla_2\psi(\hat{x};\beta_2)^T(x_{2}-\hat{x}_{2})\nonumber\\
&& + \frac{L_1^{\psi}(\beta_2)}{2}\norm{x_{1}-\hat{x}_{1}}^2 + \frac{L_2^{\psi}(\beta_2)}{2}\norm{x_{2}-\hat{x}_{2}}^2 \nonumber \\
[-1.5ex]\\[-1.5ex]
&& \overset{\tiny\textrm{def.}~\psi(\cdot;\beta_2)}{\!\!\!\!=\!\!\!} \!\!\frac{1}{2\beta_2}\norm{A\hat{x} - b}^2 \!+\! \hat{y}^TA_{1}(x_{1} \!-\! \hat{x}_{1})
\!+\! \hat{y}^TA_{2}(x_{2}  \!-\!
\hat{x}_{2})  \!+\! \frac{L_1^{\psi}(\beta_2)}{2}\norm{x_{1} \!-\! \hat{x}_{1}}^2 \!+\! \frac{L_2^{\psi}(\beta_2)}{2}\norm{x_{2} \!-\!\hat{x}_{2}}^2.\nonumber\\
&& = \hat{y}^T(Ax-b) - \frac{1}{2\beta_2}\norm{A\hat{x}-b}^2 + \frac{L_1^{\psi}(\beta_2)}{2}\norm{x_{1} \!-\! \hat{x}_{1}}^2 \!+\!
\frac{L_2^{\psi}(\beta_2)}{2}\norm{x_{2}-\hat{x}_{2}}^2. \nonumber
\end{eqnarray}
By using the expression $f(x; \beta_2) = \phi(x) + \psi(x; \beta_2)$, the definition of $\bar{x}$, the condition \eqref{eq:initial_point_cond} and
\eqref{eq:lemma24_est1} we have:
\begin{align*}
f(\bar{x}; \beta_2) &\overset{\tiny\eqref{eq:lemma24_est1}}{\leq} \phi(\bar{x}) + \bar{y}^TA_{1}(\bar{x}_{1} - x^c_{1}) + \bar{y}^TA_{2}(\bar{x}_{2} - x_{2}^c) \nonumber\\
& + \frac{L_1^{\psi}(\beta_2)}{2}\norm{\bar{x}_{1} - x_{1}^c}^2 + \frac{L_1^{\psi}(\beta_2)}{2}\norm{\bar{x}_{2} - x_{2}^c}^2 + \frac{1}{2\beta_2}\norm{Ax^c - b}^2\nonumber\\
& \overset{\tiny\eqref{eq:initial_point}}{=} \min_{x\in X}\Big\{ \phi(x) + \frac{1}{\beta_2}\norm{Ax^c - b}^2 + \bar{y}^TA_{1}(x_{1} - x_{1}^c) + \bar{y}^TA_{2}(x_{2} - x_{2}^c) \nonumber\\
& + \frac{L_1^{\psi}(\beta_2)}{2}\norm{x_{1} - x_{1}^c}^2 + \frac{L_2^{\psi}(\beta_2)}{2}\norm{x_{2} - x_{2}^c}^2 \Big\} - \frac{1}{2\beta_2}\norm{Ax^c - b}^2\nonumber\\
& = \min_{x\in X}\left\{ \phi(x) + \bar{y}^T(Ax - b) + \frac{L_1^{\psi}(\beta_2)}{2}\norm{x_{1} - x_{1}^c}^2 + \frac{L_1^{\psi}(\beta_2)}{2}\norm{x_{2} -
x_{2}^c}^2 \right\} - \frac{1}{2\beta_2}\norm{Ax^c - b}^2 \nonumber\\
&\overset{\tiny\eqref{eq:initial_point_cond}}{\leq} \min_{x\in X}\left\{ \phi(x) + \bar{y}^T(Ax - b) + \beta_1[p_1(x_{1}) + p_2(x_{2})] \right\} - \frac{1}{2\beta_2}\norm{Ax^c - b}^2\nonumber\\
& = d(\bar{y}; \beta_1)- \frac{1}{2\beta_2}\norm{Ax^c - b}^2 \leq d(\bar{y}; \beta_1),
\end{align*}
which is indeed the condition \eqref{eq:excessive_gap}.
\Eproof
\end{proof}

\noindent\textbf{A.2. The proof of Lemma \ref{le:update_tau}.}
\begin{proof}
Let us define $\xi(t) := \frac{2}{\sqrt{1+4/t^2}+1}$. It is easy to show that $\xi$ is increasing in $(0,1)$. Moreover, $\tau_{k+1} = \xi(\tau_k)$ for all
$k\geq 0$. Let us introduce $u := 2/t$. Then, we can show that $\frac{2}{u+2} < \xi(\frac{2}{u}) < \frac{2}{u+1}$.
By using this inequalities and the increase of $\xi$ in $(0,1)$, we have:
\begin{equation}\label{eq:lm42_est1}
\frac{\tau_0}{1+2\tau_0k} \equiv \frac{2}{u_0 + 2k} < \tau_k < \frac{2}{u_0 + k} \equiv \frac{2\tau_0}{2+\tau_0k}. 
\end{equation}
Now, by the update rule \eqref{eq:b1b2_update}, at each iteration $k$, we only either update $\beta_1^k$ or $\beta_2^k$. Hence, it implies that:
\begin{eqnarray}
&&\beta_1^k = (1-\tau_0)(1-\tau_2)\cdots(1-\tau_{2\lfloor{k/2\rfloor}})\beta_1^0,\nonumber\\
[-1.5ex]\label{eq:lm42_est2}\\[-1.5ex]
&&\beta_2^k = (1-\tau_1)(1-\tau_3)\cdots(1-\tau_{2\lfloor{k/2\rfloor}-1})\beta_2^0,\nonumber
\end{eqnarray}
where $\lfloor{x\rfloor}$ is the largest integer number which is less than or equal to the positive real number $x$.
On the other hand, since $\tau_{i+1} < \tau_i$ for $i\geq 0$, for any $l\geq 0$, it implies:
\begin{eqnarray}
&(1-\tau_0)\prod_{i=0}^{2l}(1-\tau_i) < \left[(1-\tau_0)(1-\tau_2)\cdots(1-\tau_{2l})\right]^2 <  \prod_{i=0}^{2l+1}(1-\tau_i), \nonumber\\
[-1.5ex]\label{eq:lm42_est3}\\[-1.5ex]
\textrm{and}~ & \prod_{i=0}^{2l-1}(1-\tau_i) < \left[(1-\tau_1)(1-\tau_3)\cdots(1-\tau_{2l-1})\right]^2 <  (1-\tau_0)^{-1}\prod_{i=0}^{2l}(1-\tau_i).
\nonumber
\end{eqnarray}
Note that $\prod_{i=0}^k(1-\tau_i) = \frac{(1-\tau_0)}{\tau_0^2}\tau_k^2$, it follows from \eqref{eq:lm42_est2} and \eqref{eq:lm42_est3} for $k\geq 1$ that:
\begin{eqnarray*}
\frac{(1-\tau_0)\beta_1^0}{\tau_0}\tau_{k+1} < \beta_1^{k+1} < \frac{\beta_1^0\sqrt{1-\tau_0}}{\tau_0}\tau_{k-1},~~\textrm{and}~ \frac{\beta_2^0\sqrt{1-\tau_0}}{\tau_0}\tau_{k+1} <
\beta_2^{k+1} < \frac{\beta_2^0}{\tau_0}\tau_{k-1}. 
\end{eqnarray*}
By combining these inequalities and \eqref{eq:lm42_est1}, and noting that $\tau_0\in (0, 1)$, we obtain \eqref{eq:tau_est2}.
\Eproof
\end{proof}

\bibliographystyle{plain}

\end{document}